\spnewtheorem*{iprop}{Proposition}{\bf}{\it}
\spnewtheorem*{ithm}{Theorem}{\bf}{\it}
\spnewtheorem*{icor}{Corollary}{\bf}{\it}
\DeclareMathOperator{\ann}{ann}
\DeclareMathOperator{\enm}{End}
\DeclareMathOperator{\ext}{Ext}
\DeclareMathOperator{\gr}{gr}
\DeclareMathOperator{\hb}{H}
\DeclareMathOperator{\hmm}{Hom}
\DeclareMathOperator*{\osum}{\oplus}
\DeclareMathOperator{\spec}{Spec}
\DeclareMathOperator{\diff}{Diff}
\DeclareMathOperator{\re}{Re}
\newcommand{\aff}{\mathbb A}
\newcommand{\cc}{\mathbb C}
\newcommand{\grdmod}{\mathbf{grMod}_{D}}
\newcommand{\grmod}[1]{\mathbf{grMod}_{#1}}
\newcommand{\grdhol}{\mathbf{grHol}_{D}}
\newcommand{\grsimp}{\mathbf{grSimp}}
\newcommand{\mcat}[1]{{}\mathbf{Mod}_{#1}}
\newcommand{\n}{\mathbb{N}}
\newcommand{\weyl}{A_1(k)}
\newcommand{\q}{\mathbb{Q}}
\newcommand{\z}{\mathbb{Z}}
\begin{document}

\title{Graded Holonomic D-modules on Monomial Curves}
\author{Eivind Eriksen}
\institute{BI Norwegian Business School, Department of Economics, N-0442
Oslo, Norway \\ \email{eivind.eriksen@bi.no}}
\date{\today}

\maketitle

\begin{abstract}
In this paper, we study the holonomic $D$-modules when $D$ is the ring of
$k$-linear differential operators on $A = k[\Gamma]$, the coordinate ring of
an affine monomial curve over the complex numbers $k = \cc$. In particular,
we consider the graded case, and classify the simple graded $D$-modules and
compute their extensions. The classification over the first Weyl algebra $D
= \weyl$ is obtained as a special case.
\keywords{Rings of differential operators \and $D$-modules \and
Representation theory}
\subclass{13N10 \and 14F10}
\end{abstract}

\section*{Introduction}

Let $\Gamma \subseteq \n_0$ be a numerical semigroup, generated by positive
integers $a_1, \dots, a_r$ without common factors, such that $H = \n_0
\setminus \Gamma$ is a finite set. We consider its semigroup algebra $A =
k[\Gamma]$ over the field $k = \cc$ of complex numbers. Since $X = \spec(A)$
is the affine monomial curve $X = \{ (t^{a_1}, t^{a_2}, \dots, t^{a_r}): t
\in k \} \subseteq \aff^r_k$, we call $A$ a \emph{monomial curve}.

We described the ring $D = \diff(A)$ of differential operators on a monomial
curve $A = k[\Gamma]$ in Eriksen \cite{erik03}, using the graded structure.
For any degree $w$, there is a homogeneous differential operator $P_w$ of
minimal order in $D_w$, given by
    \[ P_w = t^w \prod_{\gamma \in \Omega(w)} \, (E - \gamma) \]
where $E = t \partial$ is the Euler derivation, and $\Omega(w) = \{ \gamma
\in \Gamma: \gamma + w \not \in \Gamma \}$. Moreover, $E$ together with $\{
P_w: |w| \in \{ a_1, \dots, a_r \} \text{ or } |w| \in H \}$ is a set of
homogeneous generators for $D$, and $D_w = P_w \cdot k[E]$. We also showed
that the associated graded ring $\gr D$ is the finitely generated semigroup
algebra $\gr D = k[\Gamma'] \subseteq k[t,\xi]$, with
    \[ \Gamma' = \{ (m,n) \in \n_0^2: n \ge \sigma(m-n) \} \subseteq \n_0^2
    \]
where we write $\sigma(w)$ for the cardinality of $\Omega(w)$, which
satisfies $\sigma(w) = d(P_w)$.

In this paper, we use these results to study the holonomic $D$-modules on a
monomial curve. Note that $\gr D$ is a positively graded $k$-algebra via the
Bernstein filtration, but it is not generated by homogeneous elements of
degree one. We use the results of Shukla \cite{shuk80} to prove that for any
good filtration of $M$, there is a period $m \ge 1$ and polynomials $P_0,
\dots, P_{m-1} \in \q[t]$ such that $\dim_k \left( M_{nm+r} \right) = P_r(n)$
for $0 \le r < m$ and for all sufficiently large integers $n$. Moreover, the
polynomials
    \[ P_r(t) = \frac{e}{d!} \, t^d + \text{ terms of lower degree} \]
all have the same leading term. We define $d(M) = d$ and $e(M) = e$ to be the
dimension and multiplicity of $M$, and prove the Bernstein inequality $d(M)
\ge 1$.

We define a $D$-module to be holonomic if $d(M) = 1$, and use the properties
of dimension and multiplicity to study holonomic $D$-modules, generalizing
the standard results; see Coutinho \cite{cout95}. In particular, we show that
a $D$-module is holonomic if and only if it has finite length; this means
that we can study the length category of holonomic $D$-modules via its simple
modules and their extensions. The simplest example of a monomial curve is $A
= k[t]$, with the first Weyl algebra $D = \weyl$ as its ring of differential
operators. The simple modules over the first Weyl algebra were classified in
Block \cite{bloc81}, and form a very large set. We therefore restrict our
attention to the graded case.

The simple graded modules over the graded ring $D$ are the simple objects in
the length category $\grdhol$ of graded holonomic $D$-modules. We classify
the simple graded $D$-modules, up to graded isomorphisms and twists:

\begin{ithm}
If $A = k[\Gamma]$ is a monomial curve, then the simple graded left modules
over the ring $D$ of differential operators on $A$, up to graded isomorphisms
and twists, are
    \[ \{ M_0 \} \cup \{ M_{\alpha}: \alpha \in J^* \} \cup \{ M_{\infty} \}
    \]
where $J^* = \{ \alpha \in k: 0 \le \re(\alpha) < 1, \; \alpha \neq 0 \}$.
Moreover, we have that $M_0 = A$ and that $M_{\alpha} = D/D \cdot (E -
\alpha)$ for all $\alpha \in J^*$.
\end{ithm}

Any ring of differential operators on a monomial curve is Morita equivalent
to the first Weyl algebra $\weyl$, which is itself the ring of differential
operators on the monomial curve $A = k[t]$. We obtain the following special
case:

\begin{icor}
The simple graded left modules over the first Weyl algebra $D = \weyl$, up to
graded isomorphisms and twists, are given by $M_0 = D/D\partial$, $M_{\infty}
= D/Dt$, and $M_{\alpha} = D/D(E-\alpha)$ for all $\alpha \in J^*$.
\end{icor}

The Krull-Schmidt Theorem holds in $\grdhol$ since it is a length category.
In Eriksen \cite{erik18-itext}, we describe a constructive method for finding
the indecomposable objects in $\grdhol$ when the simple objects and their
extensions are given. We have therefore computed the extensions of the simple
graded $D$-modules:

\begin{iprop}
If $D$ is the ring of differential operators on a monomial curve, then the
extensions of $M_{\beta}$ by $M_{\alpha}$ in $\grdhol$ are given by
    \[ \ext^1_D(M_{\alpha},M_{\beta})_0 = \begin{cases} k \xi \cong k, &
    (\alpha,\beta) = (0,\infty), (\infty,0) \\ k \xi \cong k, & \alpha =
    \beta \in J^* \\ 0, & \text{ otherwise} \end{cases} \]
for all simple graded $D$-modules $M_{\alpha}, M_{\beta}$ with $\alpha,
\beta \in J^* \cup \{ 0, \infty \}$.
\end{iprop}

For example, when $D = \weyl$ is the first Weyl algebra, the proposition
shows that $\ext^1_D(M_{\infty}, M_0)_0 = k \xi \cong k$ for a non-split
graded extension $\xi$. We represent $\xi$ by the graded extension
    \[ 0 \to Dt/D\partial t \to D/D\partial t \to D/Dt \to 0 \]
where $D/Dt = M_{\infty}$ and $Dt/D\partial t \cong M_{0}[-1]$. Hence $M =
D/D \partial t$ is the unique indecomposable graded holonomic $D$-module of
length two with composition series $M \supseteq M_0[-1] \supseteq 0$ and
simple factors $M_{\infty}$ and $M_0[-1]$. 

It turns out that this example can be generalized. In fact, we use the 
results of this paper to classify all graded holonomic $D$-modules in 
Eriksen \cite{erik18-itext}, where we prove the following result: 

\begin{ithm}
Let $D = \weyl$ be the first Weyl algebra. The indecomposable $D$-modules
in $\grdhol$ are, up to graded isomorphisms and twists, given by
    \[ M(\alpha,n) = D/D \, (E-\alpha)^n, \quad M(\beta,n) = D/D \; \mathbf
    w(\beta,n) \]
where $n \ge 1$, $\alpha \in J^* = \{ \alpha \in k: 0 \le \re(\alpha) < 1,
\; \alpha \neq 0 \}$, $\beta \in \{ 0, \infty \}$, and $\mathbf
w(\beta,n)$ is the alternating word on $n$ letters in $t$ and $\partial$,
ending with $\partial$ if $\beta = 0$, and ending in $t$ if $\beta = \infty$.
\end{ithm}

We remark that the assumption that $k = \cc$ is one of convenience; the
results would hold over any algebraically closed field $k$ of characteristic
$0$, and the methods would be applicable if $k$ is any field of
characteristic $0$.

\section{Differential operators on affine monomial curves}
\label{s:diff-ring}

Let $\Gamma \subseteq \n_0$ be a numerical semigroup, and let $\{ a_1, a_2,
\dots, a_r \}$ be a minimal set of generators of $\Gamma$. Without loss of
generality, we may assume that these generators are without common factors,
such that $H = \n_0 \setminus \Gamma$ is a finite set. We consider the
semigroup algebra $A = k[\Gamma] \cong k[t^{a_1}, t^{a_2}, \dots, t^{a_r}]
\subseteq k[t]$ over the field $k = \cc$ of complex numbers, and the algebra
$D = \diff(A)$ of $k$-linear differential operators on $A$. Notice that $A =
k[\Gamma]$ is the coordinate ring of an affine monomial curve $X \subseteq
\aff^r$. By abuse of language, we shall call $A = k[\Gamma]$ a monomial
curve.

We notice that $A = k[\Gamma]$ is a positively graded ring. Let $S = \{ t^n:
n \in \Gamma \} \subseteq A$ be the multiplicatively closed subset consisting
of the homogeneous elements in $A$, and consider the graded localization $A
\subseteq S^{-1}A = k[t,t^{-1}] = T$. By general localization results for
rings of differential operators, there is a localization
    \[ D = \diff(A) \to S^{-1}D = S^{-1}A \otimes_A D = k[t,t^{-1}] \otimes_A
    D = \diff(T) \]
and we may identify $D$ with the subring $\{ P \in \diff(T): P(A) \subseteq A
\} \subseteq \diff(T)$; see for instance Smith, Stafford \cite{smit-staf88}.

Let us write $B = k[t]$ for the normalization of $A$, and $\diff(B)$ for its
algebra of $k$-linear differential operators. We remark that $B$ is itself
the coordinate ring of an affine monomial curve, the affine line $\aff^1$,
and that $\diff(B) = \weyl = k\langle t,\partial \rangle$ is the first Weyl
algebra, generated by $t$ and $\partial = d/dt$ and with relation $[\partial,
t] = 1$. In particular, it follows that
    \[ \diff(T) \cong k[t,t^{-1}] \otimes_{k[t]} \weyl \cong k[t,t^{-1}]
    \langle \partial \rangle \]
There is a grading on $\diff(T)$ induced by the grading on $T$, such that $P
\in \diff(T)$ is homogeneous of degree $w$ if and only if $P(T_i) \subseteq
T_{i+w}$ for all integers $i$. In concrete terms, $t^n \partial^m \in
\diff(T)$ is homogeneous of degree $n-m$. Moreover, $D = \diff(A)$ is the
graded subalgebra
    \[ D = \{ P \in \diff(T): P(A) \subseteq A \} \subseteq k[t,t^{-1}]
    \langle \partial \rangle \]
We write $D = \osum_w \, D_w$, where $D_w$ is the linear subspace of
differential operators of degree $w$.

Based on these results, we gave an explicit description of the algebra $D$ of
differential operators on an affine monomial curve in Eriksen \cite{erik03}:
For any degree $w \in \z$, we have that $D_w = P_w \cdot k[E]$, where $E = t
\partial$ is the Euler derivation of degree zero, and $P_w$ is the
homogeneous differential operator of degree $w$ given by
    \[ P_w = t^w \prod_{\gamma \in \Omega(w)} \; (E - \gamma) \]
determined by the set $\Omega(w) = \{ \gamma \in \Gamma: \gamma + w \not \in
\Gamma \}$. In particular, we proved the following result:

\begin{theorem}
If $A = k[\Gamma]$ is a monomial curve, then the $k$-algebra $D$ of
differential operators on $A$ is generated by the Euler derivation $E$ and
the differential operators $P_w$ for all degrees $w$ such that $|w| \in \{
a_1, a_2, \dots, a_r \}$ or $|w| \in H$.
\end{theorem}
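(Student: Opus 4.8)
The plan is to show that the Euler derivation $E$ together with the operators $P_w$ for $|w| \in \{a_1, \dots, a_r\} \cup H$ generate all of $D$ as a $k$-algebra. Since $D = \osum_w D_w$ with $D_w = P_w \cdot k[E]$, it suffices to show that every $P_w$ (for arbitrary $w \in \z$) lies in the subalgebra generated by $E$ and the listed generators; the factor $k[E]$ is automatically available once $E$ is a generator. The key structural fact I would exploit is the behavior of the operators $P_w$ under composition, which reflects the additive structure of the degrees.

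First I would establish the composition rule for the $P_w$. Because $P_w$ has degree $w$ and $D_w = P_w \cdot k[E]$, the product $P_v P_w$ is a homogeneous operator of degree $v+w$, hence lies in $D_{v+w} = P_{v+w} \cdot k[E]$. Thus $P_v P_w = P_{v+w} \cdot f_{v,w}(E)$ for some polynomial $f_{v,w} \in k[E]$, and I would compute this polynomial explicitly using the formula $P_w = t^w \prod_{\gamma \in \Omega(w)}(E-\gamma)$ together with the commutation identity $E \, t^w = t^w (E+w)$, which lets one slide the Euler factors past the powers of $t$. Comparing the resulting product of linear factors $(E-\gamma)$ against the factors defining $P_{v+w}$ determines $f_{v,w}$ as a product of terms $(E - \gamma - \text{shift})$ over the symmetric-difference set $\Omega(v) \cup (\Omega(w)+\text{shift})$ minus $\Omega(v+w)$. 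The arithmetic here is governed entirely by whether various semigroup elements lie in $\Gamma$, so the combinatorics of $\Gamma$ controls which factors survive.

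Next I would carry out the inductive generation argument. For positive degrees $w > 0$, I would write $w$ as a sum of generators $a_i$ (possible since the $a_i$ generate $\Gamma$ and any large enough $w$ is in $\Gamma$; the exceptional degrees in $H$ are included as generators precisely to handle the finitely many gaps), and use the composition rule to express $P_w$ in terms of products $P_{a_{i_1}} \cdots P_{a_{i_s}}$ and polynomials in $E$. The point is that each composition step either produces $P_{v+w}$ directly (when $f_{v,w}$ is a unit, i.e.\ a nonzero scalar) or produces $P_{v+w}$ multiplied by explicit linear factors in $E$, which are already in our subalgebra. For negative degrees I would argue symmetrically using the operators $P_{-a_i}$ and $P_{-h}$ for $h \in H$, or appeal to the analogous decomposition of negative $w$.

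The main obstacle I anticipate is the bookkeeping of the correction polynomials $f_{v,w}(E)$: I must verify that at each composition step the leading operator really is $P_{v+w}$ (and not some multiple that fails to generate the full space $D_{v+w}$), and that the accumulated factors in $E$ never obstruct recovering $P_{v+w}$ itself. Concretely, the danger is that $P_v P_w$ could equal $P_{v+w}$ times a polynomial with no constant access to $P_{v+w}$ alone; I would resolve this by checking, via the $\Omega$-set combinatorics, that for a suitable choice of decomposition the factor $f_{v,w}$ is nonzero and that $P_{v+w}$ can be isolated. Handling the gap set $H$ correctly is the delicate point, since those degrees are exactly where the naive "multiply generators of $\Gamma$'' strategy breaks down, and this is why $H$ must be included in the generating set; establishing that these finitely many extra operators suffice to bridge all the gaps is where the real content of the theorem lies.
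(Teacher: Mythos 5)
First, a remark on the comparison: the paper does not prove this theorem here at all --- it is quoted from the earlier work \cite{erik03} (``In particular, we proved the following result''), so your proposal can only be judged on its own merits. Your framework is the right one: reduce to producing each $P_w$, since $D_w = P_w \cdot k[E]$ and $E$ is a listed generator, and exploit the composition rule $P_v P_w = P_{v+w} \, f_{v,w}(E)$ obtained from $(E-\gamma)\,t^w = t^w (E + w - \gamma)$ together with the $\Omega$-set combinatorics.

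However, there is a genuine gap at exactly the point you flag as the ``main obstacle'' and then leave unresolved. If $f_{v,w}$ is a non-constant polynomial, then knowing that $P_{v+w} f_{v,w}(E)$ and the linear factors $(E-\gamma)$ lie in your subalgebra does \emph{not} yield $P_{v+w}$: one cannot divide by $(E-\gamma)$ in $D$, and the set of $g \in k[E]$ with $P_{v+w}\, g(E)$ in the subalgebra is merely an ideal of $k[E]$ containing $f_{v,w}$. Checking that $f_{v,w}$ is ``nonzero'' is not enough; you need it to be a nonzero \emph{constant}, or else several decompositions whose correction polynomials generate the unit ideal of $k[E]$ (a B\'ezout argument you never actually set up). The missing lemma that closes the induction is: if $v,w$ both lie in $\Gamma$, or both lie in $-\Gamma$, then $\sigma(v) + \sigma(w) = \sigma(v+w)$; a counting argument then forces the multiset $\bigl(\Omega(v)-w\bigr) \sqcup \Omega(w)$ to coincide with $\Omega(v+w)$, so $f_{v,w} = 1$ and $P_v P_w = P_{v+w}$ \emph{exactly}. (For $v,w \in \Gamma$ this is trivial, since $\Omega(u) = \emptyset$ and $P_u = t^u$ for $u \in \Gamma$; for $v,w \in -\Gamma$ it follows from $\sigma(-u) = \sigma(u) + u = u$ for $u \in \Gamma$.) With this in hand the theorem is immediate: for $w > 0$, either $w \in \Gamma$ and $P_w = t^w = P_{a_{i_1}} \cdots P_{a_{i_s}}$ for any decomposition $w = a_{i_1} + \dots + a_{i_s}$, or $w \in H$ and $P_w$ is itself a listed generator; for $w < 0$, either $-w \in \Gamma$ and $P_w = P_{-a_{i_1}} \cdots P_{-a_{i_s}}$, or $-w \in H$ and $P_{w}$ is listed. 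Note also that the gap degrees are handled by fiat (those $P_{\pm h}$ are simply included among the generators), not by any ``bridging'' argument; the real content is the exact multiplicativity of the $P_w$ on $\Gamma$ and on $-\Gamma$, which your sketch does not establish.
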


The following structural results on the algebra $D$ of differential operators
on $A$ is a consequence of the work of Smith, Stafford \cite{smit-staf88}:

\begin{theorem} \label{t:diff-str}
If $A = k[\Gamma]$ is a monomial curve, then the ring $D$ of differential
operators on $A$ is Morita equivalent with the first Weyl algebra $\diff(B)
= \weyl$, and $D$ has the following properties:
\begin{enumerate}
\item $D$ is a simple Noetherian ring
\item $A$ is a simple left $D$-module
\item $D$ has Krull dimension $1$ and Gelfand-Kirillov dimension $2$
\item $D$ is a hereditary ring
\end{enumerate}
\end{theorem}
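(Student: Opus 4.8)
The plan is to deduce all four properties from the single fact that $D = \diff(A)$ is Morita equivalent to $\diff(B) = \weyl$, which is the content of the work of Smith and Stafford \cite{smit-staf88} once its geometric hypothesis is verified in our setting. First I would check that the normalization map $\nu \colon \spec(B) = \aff^1 \to X = \spec(A)$, given on closed points by $t \mapsto (t^{a_1}, \dots, t^{a_r})$, is a bijection. This is exactly where the assumption $\gcd(a_1, \dots, a_r) = 1$ is used: if $t^{a_i} = s^{a_i}$ for all $i$ with $s, t \neq 0$, then $t/s$ is a root of unity whose order divides every $a_i$, hence divides $\gcd(a_1, \dots, a_r) = 1$, so $t = s$; the origin is the only point over the singular point. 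Thus $X$ is unibranch and its normalization is a bijection, which is precisely the situation in which Smith and Stafford prove that $\diff(A)$ and $\diff(B)$ are Morita equivalent, the equivalence being implemented by the bimodules $\diff(A,B) = \{ P \in \diff(T) : P(A) \subseteq B \}$ and $\diff(B,A)$.

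Granting this Morita equivalence, properties (1), (3) and (4) follow by transfer, since each relevant invariant is preserved under Morita equivalence. The first Weyl algebra $\weyl$ is a simple Noetherian ring, so $D$ is simple and Noetherian, giving (1). The Krull dimension (in the Gabriel--Rentschler sense, computed from the module category) and the Gelfand--Kirillov dimension are Morita invariants; since $\weyl$ has Krull dimension $1$ and Gelfand--Kirillov dimension $2$, so does $D$, giving (3). Finally, global dimension is a Morita invariant, and $\weyl$ is hereditary of global dimension $1$, so $D$ is hereditary, giving (4).

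For property (2) I would use that the Morita equivalence carries $B$, as a left $\diff(B)$-module, to $A$, as a left $D$-module: this is built into the Smith--Stafford construction, where $A \cong \diff(A,B) \otimes_{\diff(B)} B$ under the identification of $A$ and $B$ inside $T$. Since $B = k[t] \cong \weyl/\weyl\partial$ is a simple left $\weyl$-module — any nonzero submodule contains a nonzero polynomial, hence after enough applications of $\partial$ a nonzero constant, and then all of $k[t]$ by multiplication by powers of $t$ — and Morita equivalence preserves simplicity, $A$ is a simple left $D$-module.

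The main obstacle is not the transfer of invariants, which is formal, but pinning down the Smith--Stafford equivalence precisely enough to conclude (2): one must confirm that the progenerator realizing the equivalence sends the natural module $B$ to the natural module $A$ rather than to some twist, and that the bijectivity hypothesis in their theorem matches the form verified above. An alternative, self-contained route for (2) bypasses the equivalence and argues directly from the graded structure recalled earlier: any nonzero graded $D$-submodule $N \subseteq A$ contains a homogeneous element $t^n$, and applying the operators $P_w$ and the Euler operator $E = t\partial$ then forces $N = A$; this avoids the identification step at the cost of a short argument reducing to the graded case.
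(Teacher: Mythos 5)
Your proposal is correct and takes essentially the same route as the paper: both verify that the normalization map $\aff^1 \to X$ is injective (you spell out the $\gcd$ argument that the paper leaves implicit), invoke Smith--Stafford for the Morita equivalence with $\weyl$, and transfer the remaining properties by Morita invariance. The only cosmetic difference is that the paper obtains the simplicity of $A$ as a left $D$-module directly from the cited Smith--Stafford results rather than by tracking the image of $B$ under the equivalence as you do.
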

\begin{proof}
Since the normalization of $X = \spec(A)$ is the affine line $\spec(B) =
\aff^1$, and the normalization map $\aff^1 \to X$ is injective, it follows
from Proposition 3.3, Theorem 3.4, Theorem 3.7 and Proposition 4.2 in Smith,
Stafford \cite{smit-staf88} that $D$ is Morita equivalent with the Weyl
algebra $\weyl$, that $D$ is a simple ring, and that $A$ is a simple left
$D$-module. The rest follows from the fact that the properties are preserved
under Morita equivalence, and hold for $\weyl$. \qed
\end{proof}

Let $D^p \subseteq D$ be the set of differential operators in $D$ of order at
most $p$. We call $\{ D^p \}$ the order filtration of $D$, and consider the
associated graded ring
    \[ \gr D = \osum_{p \ge 0} D^p/D^{p-1} \]
It is well-known that $\gr D$ is a commutative ring, and that $\gr \diff(B) =
k[t,\xi]$, where $\xi$ is the image of $\partial$. In Eriksen \cite{erik03},
we proved that, for $\Gamma \neq \n_0$, the ring $\gr D = k[\Gamma'] \subseteq
k[t,\xi]$ is a semigroup algebra with minimal set of generators
    \[ \{ t^{\sigma(-w)} \xi^{\sigma(w)}: |w| \in \{ a_1, \dots, a_r \}
    \text{ or } |w| \in H \} \cup \{ t \xi \} \]
In the proof, we use the fact that $P_w$ has leading term $t^{w + \sigma(w)}
\partial^{\sigma(w)}$, where $\sigma(w)$ is the cardinality of $\Omega(w)$,
and that $\sigma(-w) = \sigma(w) + w$ for all integers $w$. In particular,
$\gr D = k[\Gamma'] $ is a finitely generated $k$-algebra of Krull dimension
two, and $\n_0^2 \setminus \Gamma'$ is a finite set.

Let us write $D^p_w = D^p \cap D_w \subseteq D$ for the linear subspace of
differential operators of degree $w$ and order at most $p$ in $D$, and define
    \[ B^n = \osum_{2p+w \le n} \; D^p_w \]
for all integers $n$. Then $\{ B^n \}$ is a filtration of $D$, which we call
the \emph{Bernstein filtration}. Note that any differential operator $P \in
B^n$ is of the form
    \[ P = \sum_{i+j \le n} \, c_{ij} \, t^i \partial^j \]
Clearly, the associated graded ring $\osum_n \, B^n/B^{n-1}$ is naturally
isomorphic to $\gr D$. Moreover, since $\gr D \subseteq k[t,\xi]$, it follows
that $B^0 = k$, that $B^n = 0$ for $n < 0$, and that $\dim_k B^n$ is finite
for all integers $n$.

\section{Holonomic D-modules on monomial curves}

Let $M$ be a left $D$-module. A \emph{filtration} of $M$ is a chain $M_0
\subseteq M_1 \subseteq \dots$ of $k$-linear subspaces of $M$ such that
$\cup_n \, M_n = M$ and $B^m \cdot M_n \subseteq M_{m+n}$ for all integers
$m,n \ge 0$. By convention, we let $M_n = 0$ for $n < 0$. We say that the
filtration is \emph{good} if the associated graded module $\gr M = \osum_n \,
M_n/M_{n-1}$ is a finitely generated module over $\gr D$.

We recall that there is a good filtration of $M$ if and only if $M$ is a
finitely generated $D$-module. Moreover, if $\{ M_n \}$ and $\{ M'_n \}$ are
two good filtrations of $M$, then there is an integer $N$ such that
    \[ M_{n-N} \subseteq M'_n \subseteq M_{n+N} \]
for all integers $n$. This is a standard result; see for instance Chapter 1
in Bj\"{o}rk \cite{bjor79}.

We fix a good filtration $\{ M_n \}$ of a left $D$-module $M$, and remark
that $\dim_k M_n$ is finite for all integers $n \ge 0$. In fact, we have that
$(\gr M)_n = M_n/M_{n-1}$ is finitely generated over $B_0 = k$, since $\gr M$
is a finitely generated module over $\gr D$, and $\gr D$ is positively graded
with $(\gr D)_0 = B_0 = k$. The Hilbert function of $\gr M$ is the function
$\hb(\gr M,-)$ given by
    \[ \hb(\gr M,n) = \dim_k \left( M_n/M_{n-1} \right) = \dim_k M_n - \dim_k
    M_{n-1} \]
and $\hb^1(\gr M,n) = \dim_k M_n$ is the first iterated Hilbert function of
$\gr M$.

\begin{proposition} \label{p:hilb-qp}
Let $M$ be a non-zero, finitely generated left $D$-module and let $\{ M_n \}$
be a good filtration of $M$. Then there is a positive integer $m \ge 1$ and
polynomials $P_r(t) \in \q[t]$ for $0 \le r < m$, such that $\dim_k \left(
M_{nm+r} \right) = P_r(n)$ for all sufficiently large integers $n$. Moreover,
we have that
    \[ P_r(t) = \frac{e}{d!} \cdot t^d + \text{ terms of lower degree} \]
for $0 \le r < d$, where $d$ is the Krull dimension of $\gr M$, and $e > 0$
is a positive integer.
\end{proposition}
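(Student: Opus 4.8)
The plan is to read the asymptotics of $\dim_k M_n$ off the Hilbert series of $\gr M$ via its partial fraction expansion. The one feature that distinguishes this from the classical situation is that, although $\gr D$ is a finitely generated positively graded $k$-algebra, it is \emph{not} generated in degree one (in the Bernstein grading $t^i\xi^j$ has degree $i+j$, and the minimal generators recalled above have various degrees); consequently the Hilbert function will only be a quasi-polynomial, not an honest polynomial, and the real work lies in controlling the period and the leading term.

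First I would fix the relevant structure of $\gr D$. By the description recalled in Section~\ref{s:diff-ring}, the semigroup algebra $\gr D = k[\Gamma']$ is generated by finitely many homogeneous elements, of Bernstein degrees $d_1,\dots,d_s$; set $m = \lcm(d_1,\dots,d_s)$. Since $\gr M$ is a finitely generated graded $\gr D$-module, the form of the Hilbert--Serre theorem valid for arbitrary finitely generated positively graded algebras --- which is the content of Shukla \cite{shuk80} --- shows that the Hilbert series $\sum_{n\ge 0}\hb(\gr M,n)\,T^n$ is a rational function whose denominator divides $(1-T^m)^s$. Extracting coefficients by partial fractions then shows that $\hb(\gr M,n)$ agrees, for $n \gg 0$, with a quasi-polynomial of period $m$: there are $Q_0,\dots,Q_{m-1}\in\q[t]$ with $\hb(\gr M, nm+r) = Q_r(n)$ for all large $n$.

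Next I would pass to the first iterated Hilbert function $\hb^1(\gr M,n) = \dim_k M_n$. Multiplying the Hilbert series by $(1-T)^{-1}$ gives the generating function $\sum_n (\dim_k M_n)\,T^n$, whose denominator still divides $(1-T)(1-T^m)^s$; since this operation introduces no new poles at roots of unity, the same partial-fraction argument shows that $\dim_k M_n$ is quasi-polynomial of period $m$ for $n \gg 0$, yielding the required $P_0,\dots,P_{m-1}\in\q[t]$ with $\dim_k M_{nm+r} = P_r(n)$. The degree of each $P_r$ equals the order of the pole at $T=1$ of the iterated series, minus one; by the dimension theory of graded modules this pole has order $d+1$, where $d$ is the Krull dimension of $\gr M$, so every $P_r$ has degree $d$.

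Finally I would pin down the common leading term and the integrality of $e$. The contribution of the pole at $T=1$ to $\dim_k M_n$ is an honest polynomial in the absolute index $n$, while the poles at the other roots of unity $\zeta\neq 1$ have strictly smaller order and contribute terms of lower degree that depend on the residue class but not on the top coefficient; writing $n = qm+r$ therefore produces the same leading coefficient, say $\frac{e}{d!}$, in every $P_r$. Since each $P_r$ takes the integer values $\dim_k M_{nm+r}$ for large $n$, its leading coefficient times $d!$ is an integer, so $e\in\z$, and since $M\neq 0$ is finitely generated the values $\dim_k M_n$ are eventually positive, forcing $e>0$. I expect the main obstacle to be precisely this non-standard grading: separating the pole at $T=1$ from the spurious poles at the other roots of unity, so as to prove that all $m$ branch polynomials $P_r$ share a single leading term, is exactly the point where the refinement of Shukla \cite{shuk80} over the classical standard-graded theory is needed.
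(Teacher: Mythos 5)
Your overall architecture --- rational Hilbert series, quasi-polynomiality with period $m$, passage to the iterated Hilbert function by multiplying by $(1-T)^{-1}$, and extraction of a common leading term from the pole at $T=1$ --- is essentially the paper's, which also works with $\hb_{\gr M}(t)$ and then makes the leading term residue-independent. But there is a gap at exactly the step you yourself flag as the main obstacle: you assert that the poles of the iterated series at roots of unity $\zeta\neq 1$ have strictly smaller order than the pole at $T=1$, and nothing in your setup delivers this. You write the denominator as $(1-T^m)^s$ with $s$ the number of algebra generators of $\gr D$; from that form the order of the pole at $\zeta \neq 1$ is only bounded by $s$, which has nothing to do with $d=\dim \gr M$ and can exceed it. Without the bound $\operatorname{ord}_{\zeta}\hb_{\gr M} \le d$ for all $\zeta\neq 1$, the branch polynomials $P_r$ could a priori have degree larger than $d$, or degree $d$ with $r$-dependent leading coefficients --- and for the non-iterated Hilbert function this genuinely happens (e.g.\ $k[x]$ with $\deg x=2$ has branch values $1$ and $0$), so the claim is not formal and cannot simply be deferred to a citation.

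The missing ingredient is the homogeneous-system-of-parameters form of the Hilbert series: since $\gr M$ is a finite module over a polynomial subring on $d$ parameters of common degree $l$, one has $\hb_{\gr M}(t)=Q(t)/(1-t^l)^d$ with $Q\in\z[t]$ and $Q(1)>0$, whence every pole of $\hb_{\gr M}$ has order at most $d$; after multiplying by $(1-t)^{-1}$ the pole at $t=1$ has order $d+1$ and strictly dominates all others. This is precisely what the paper takes from Proposition 4.4.1 of Bruns--Herzog, and it is not a consequence of the crude Hilbert--Serre denominator built from the generators. Once you insert this, your partial-fraction argument closes and is equivalent to the paper's computation, which avoids partial fractions altogether: it writes $\dim_k M_{nm+r}$ as a cumulative sum of $\hb(\gr M,j)$ over $j\le nm+r$, notes that summing the branch polynomials $p_s$ over a full period yields the residue-independent leading coefficient $a(1)/(d-1)!$, and absorbs the boundary correction $C_r$ into lower-order terms. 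Your remarks on the integrality and positivity of $e$ are fine once the degree and the dominance of the pole at $t=1$ are pinned down.
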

\begin{proof}
To simplify notation, we write $R = \gr D$, and $N = \gr M$.  We consider the
Hilbert series $\hb_{N}(t) = \sum_n \hb(N,n) \cdot t^n$ of $N$ in $\z[[t]]$,
which can be written as a rational function
    \[ \hb_{N}(t) = \frac{Q(t)}{(1-t^l)^d} \]
where $Q(t) \in \z[t]$ with $Q(1) > 0$, and $l \ge 1$ is a positive integer.
This follows from Proposition 4.4.1 and the following remarks in Bruns,
Herzog \cite{brun-herz93}, since there is a homogeneous system of parameters
for $N$ of common degree $l > 0$. Let
    \[ m = \min \{ l \ge 1: \hb_{N}(t) \cdot (1-t^l)^d \in \z[t] \} \]
and let $a(t) = \hb_{N}(t) \cdot (1-t^m)^d \in \z[t]$. Then $a(1) > 0$, and
by Proposition 2.3 and Remark 2.4 in Campbell et al. \cite{cghsw00}, it
follows that there are polynomials $p_r(t) \in \q[t]$ for $0 \le r < m$ such
that $\deg p_r(t) \le d-1$ and $p_r(n)=\hb(N,nm+r)$ for all $n$ large enough.
Furthermore, since $a(1) > 0$, it follows that $p_r(t)$ has degree $d-1$ for
at least one integer $r$ with $0 \le r < m$. Let us write $a_r(t)$ for the
polynomial
    \[ a_r(t) = \sum_{n \ge 0} \, \alpha_{nm+r} \cdot t^n \]
in $\z[t]$, where the coefficients $\alpha_i$ for $i \ge 0$ are given by
$a(t) = \sum \, \alpha_i t^i$. We obtain polynomials $a_0(t), \dots, a_{m-1}
(t) \in \z[t]$ with $a(1) = a_0(1) + \dots + a_{m-1}(1)$. Furthermore, the
polynomial $p_r(t)$ has the form
    \[ p_r(t)= \frac{a_r(1)}{(d-1)!} \; t^{d-1} + \text{ terms of lower
    degree} \]
for $0 \le r < m$, where $a_r(1) \ge 0$ for all $r$. To compute the first
iterated Hilbert function, we choose a positive integer $n_0$ such that
$\hb(N,nm+r) = p_r(n)$ for $n > n_0$ and $0 \le r < m$. If we let $C_r =
\hb(N,nm+r+1) + \dots + \hb(N,nm+m-1)$, then we have, for $n > n_0$, that
\begin{align*}
\hb^1(N,nm + r) &= \sum_{s = 0}^{m-1} \sum_{t = 0}^{n} \hb(N,tm+s) - C_r \\
&= \sum_{s = 0}^{m-1} \sum_{t=n_0+1}^n p_s(t) - C_r + \hb^1(N,n_0 m + m-1) \\
&= \sum_{s = 0}^{m-1} \sum_{t = n_0+1}^n \frac{a_s(1)}{(d-1)!} \, t^{d-1} +
\text{ terms of lower degree } \\
&= \sum_{t = n_0 + 1}^{n} \frac{a(1)}{(d-1)!} \, t^{d-1} + \text{ terms of
lower degree } \\
&= \frac{a(1)}{d!} \, t^d + \text{ terms of lower degree }
\end{align*}
since $C_r$ and $\hb^1(N,n_0 m + m-1)$ are constants. Hence, there are
polynomials $P_r(t) \in \q[t]$ for $0 \le r < m$ of the form
    \[ P_r(t) = \frac{e}{d!} \, t^d + \text{ terms of lower degree } \]
such that $\hb^1(nm+r) = P_r(n)$ for all $n > n_0$, where $d$ is the Krull
dimension of $N$ and $e = a(1) > 0$ is a positive integer. \qed
\end{proof}

We remark that $d$ and $e$ in Proposition \ref{p:hilb-qp} are independent of
the chosen good filtration $\{ M_n \}$ of $M$. We define the \emph{dimension}
of $M$ to be $d(M) = d$, and the \emph{multiplicity} of $M$ to be $e(M) = e$
for any finitely generated left $D$-module $M \neq 0$. These invariants have
the following properties: If $0 \to M' \to M \to M'' \to 0$ is a short exact
sequence of $D$-modules, then $d(M) = \max \{ d(M'), d(M'') \}$. If moreover
$d(M') = d(M) = d(M'')$, then $e(M) = e(M') + e(M'')$. This follows from the
fact that a good filtration of $M$ induces good filtrations of $M'$ and $M''$.

\begin{proposition}[Bernstein's inequality]
For any finitely generated left $D$-module $M \neq 0$, we have $d(M) \ge 1$.
\end{proposition}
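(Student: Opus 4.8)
The plan is to argue by contradiction: assuming $d(M) = 0$, I will show $M = 0$. The mechanism is the standard one for Weyl-type algebras — faithfulness of the action forces the Bernstein filtration of $D$ to embed into Hom-spaces between finite-dimensional layers of $M$, and a dimension count then collides with the quadratic growth of $\dim_k B^n$.

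First I would fix a good filtration $\{ M_n \}$ and an integer $N_0$ with $M_n \neq 0$ for $n \ge N_0$, which exists since $M \neq 0$. The central object is, for each $i \ge 0$ and each large $j$, the $k$-linear evaluation map
    \[ \Phi_{i,j}: B^i \to \hmm_k(M_j, M_{i+j}), \quad P \mapsto (m \mapsto Pm), \]
which is well defined because $B^i \cdot M_j \subseteq M_{i+j}$. The key claim is that $\Phi_{i,j}$ is injective whenever $j \ge i + N_0$.

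To prove this claim I would use a commutator descent, resting on two ingredients. The first is that the center of $D$ equals $k$: since $D$ is simple (Theorem \ref{t:diff-str}), its center $Z(D)$ is a field containing $k$, and by Morita invariance of the center together with $Z(\weyl) = k$ we get $Z(D) = k$. The second is the degree drop $[B^a, B^b] \subseteq B^{a+b-2}$, which I would verify from the explicit commutation relations of the monomials $t^i \partial^j$ in $\diff(T) = k[t,t^{-1}]\langle \partial \rangle$, where the graded components of degree $a+b$ and $a+b-1$ in $[P,Q]$ cancel. Granting these, suppose $P \in B^i$ is nonzero with $P M_j = 0$. If $P$ has Bernstein degree $0$, then $P$ is a nonzero scalar, forcing $M_j = 0$, which is impossible for $j \ge N_0$. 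If $P$ has degree $\ge 1$, then $P$ is not central, so some generator $g \in \{ E \} \cup \{ P_w \}$ satisfies $[g,P] \neq 0$; a direct check gives $[g,P] \cdot M_{j-1} = 0$, while $[g,P] \in B^{\,\deg P - 1}$ has strictly smaller degree. Iterating this descent at most $\deg P \le i$ times produces a nonzero scalar annihilating $M_{j - \deg P}$, and since $j - \deg P \ge j - i \ge N_0$ that layer is nonzero, a contradiction. Hence $\Phi_{i,j}$ is injective.

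Finally I would run the dimension count. Injectivity yields $\dim_k B^i \le \dim_k M_{i+N_0} \cdot \dim_k M_{2i+N_0}$ for all $i$ (taking $j = i + N_0$). If $d(M) = 0$, then by Proposition \ref{p:hilb-qp} the function $\dim_k M_n$ is eventually a quasi-polynomial of degree $0$, hence bounded by some constant $C$, so the right-hand side is at most $C^2$. But $\gr D = k[\Gamma']$ with $\n_0^2 \setminus \Gamma'$ finite, so $\dim_k B^i = \binom{i+2}{2} - \#(\n_0^2 \setminus \Gamma')$ for $i$ large, which tends to infinity. This contradiction shows $d(M) \ge 1$. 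The main obstacle is the descent step: arranging that one commutator with a generator simultaneously lowers the Bernstein degree and kills one more layer $M_{j-1}$, which is precisely where the non-commutativity of $D$ and the triviality of its center are used.
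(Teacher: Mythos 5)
Your overall strategy (rule out $d(M)=0$ by pitting the quadratic growth of $\dim_k B^n$ against a faithfulness statement) is the classical Joseph-style proof for the Weyl algebra, but the descent step at its heart breaks down for a genuine monomial curve $\Gamma \neq \n_0$. You need a generator $g$ such that (i) $[g,P]$ has strictly smaller Bernstein degree than $P$ and (ii) $[g,P]$ annihilates a layer only one step below $M_j$. Both properties require $g \in B^1$, and for $\Gamma \neq \n_0$ one has $B^1 = k$: there is no analogue of $t$ and $\partial$ in Bernstein degree one. Concretely, $E = t\partial$ lies in $B^2$ and $[E,P] = wP$ for $P$ homogeneous of $\z$-degree $w$, so commuting with $E$ never lowers the degree (it either kills $P$ or reproduces it); and the generators $P_w$ lie in $B^{2\sigma(w)+w}$ with $2\sigma(w)+w \ge 2$, so $[P_w, P] \in B^{\deg P + 2\sigma(w) + w - 2}$, which is of degree $\ge \deg P$, not $\deg P - 1$. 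Moreover, if $g \in B^c$ then the correct conclusion is $[g,P]\cdot M_{j-c} = 0$, not $[g,P]\cdot M_{j-1} = 0$. So the iteration neither terminates nor stays in the range where the layers are known to be nonzero. The argument as written is valid only for $D = \weyl$; to transfer it to general $D$ you would need to invoke Morita equivalence \emph{and} justify that $d(M)$ is Morita-invariant, which your write-up does not address.

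The paper's own proof is much shorter and avoids all of this: since $D$ is simple (Theorem~\ref{t:diff-str}), the algebra map $D \to \enm_k(M)$ is injective for $M \neq 0$; if $d(M) = 0$ then $M$ is finite-dimensional over $k$, so $\enm_k(M)$ is finite-dimensional, contradicting $\dim_k D = \infty$. If you want to keep a quantitative argument in the spirit of your proposal, the fix is to replace the commutator descent by this simplicity argument (or to prove the inequality for $\weyl$ and carefully transport dimension across the Morita equivalence); as it stands, the key claim that $\Phi_{i,j}$ is injective is unsupported for $\Gamma \neq \n_0$.
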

\begin{proof}
From Theorem \ref{t:diff-str}, it follows that $D$ is a simple ring, and
the algebra homomorphism  $D \to \enm_k(M)$ is therefore injective. This
implies that $M$ cannot be finite dimensional over $k$, hence $d(M) \ge 1$.
\qed
\end{proof}

We say that a finitely generated left $D$-module $M$ is \emph{holonomic} if
$M \neq 0$ and $d(M) = 1$. By convention, $M = 0$ is also considered to be
holonomic. If $M$ is holonomic, then it has finite length. In fact, if $M'
\subseteq M$ is a non-zero submodule of $M$, then $M'$ is holonomic and $e(M')
< e(M)$ if $M' \neq M$. Hence the length of $M$ is at most $e(M)$.

\begin{proposition} \label{p:dmod-hol}
Let $M$ be a finitely generated left $D$-module. Then the following
conditions are equivalent:
\begin{enumerate}
\item $M$ is holonomic
\item $M$ has finite length
\item $M$ is cyclic and not isomorphic to $D$
\end{enumerate}
\end{proposition}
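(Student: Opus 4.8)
The plan is to establish the cycle $(1)\Rightarrow(2)\Rightarrow(3)\Rightarrow(1)$, which yields all the equivalences. The implication $(1)\Rightarrow(2)$ is already recorded in the remarks preceding the proposition: a holonomic module has length at most $e(M)$.

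For $(3)\Rightarrow(1)$, I would first observe that $D$ is a domain, being a subring of $\diff(T)=k[t,t^{-1}]\langle\partial\rangle$, which is an Ore localization of the Weyl algebra $\weyl$. Write $M=D/I$. Since $M\not\cong D$ we must have $I\neq 0$, and we may assume $M\neq 0$, so $I$ is a proper nonzero left ideal; choose $0\neq a\in I$. Because $D$ is a domain, the map $x\mapsto xa$ is an isomorphism of left $D$-modules $D\cong Da$, so $d(Da)=d(D)=2$ and $e(Da)=e(D)$, using that $\gr D=k[\Gamma']$ has Krull dimension two. Note also that $Da\subseteq I\subsetneq D$, so $a$ is not a unit and $D/Da\neq 0$. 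Applying the additivity of dimension and multiplicity to $0\to Da\to D\to D/Da\to 0$, the assumption $d(D/Da)=2$ would give $e(D)=e(Da)+e(D/Da)=e(D)+e(D/Da)$, forcing $e(D/Da)=0$, which is impossible for a nonzero module; hence $d(D/Da)=1$ by Bernstein's inequality, so $D/Da$ is holonomic. As $Da\subseteq I$, the module $M=D/I$ is a quotient of $D/Da$, whence $d(M)\le 1$, and $d(M)=1$ by Bernstein; thus $M$ is holonomic.

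For $(2)\Rightarrow(3)$, the statement $M\not\cong D$ is immediate: by Theorem \ref{t:diff-str} the ring $D$ has Krull dimension one and is therefore not Artinian, so ${}_DD$ has infinite length while $M$ has finite length. The remaining assertion, that a finite-length module $M$ is cyclic, is the heart of the matter and the step I expect to be the main obstacle. I would argue by induction on the length of $M$: the case of a simple module is trivial, and in the inductive step I would choose a simple submodule $S=Ds_0$ with $M/S$ cyclic, so that $M=Dx+Ds_0$ is generated by two elements, and then seek a single generator of the form $x+d\,s_0$ with $d\in D$. Merging the two generators into one is precisely where the special nature of $D$ must enter — that it is a simple, hereditary Noetherian domain of Krull dimension one, Morita equivalent to $\weyl$ — and I would obtain it from Stafford's theorems on the number of generators of modules over such rings, which in particular guarantee that finitely generated torsion modules are cyclic. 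The delicate point is that Morita equivalence preserves length but not the minimal number of generators, so the cyclicity must be justified for $D$ directly; this is legitimate because Stafford's generation results depend only on the ring-theoretic properties of $D$ collected in Theorem \ref{t:diff-str}, and these hold for $D$ itself.
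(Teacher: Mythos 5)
Your proposal follows essentially the same route as the paper's proof: $(1)\Rightarrow(2)$ via the bound of the length by the multiplicity, $(3)\Rightarrow(1)$ via the exact sequence $0 \to D \xrightarrow{\cdot a} D \to D/Da \to 0$ and additivity of dimension and multiplicity together with $d(D)=2$ and $e(D/Da)>0$, and $(2)\Rightarrow(3)$ by delegating cyclicity to the external fact that finite-length modules over a simple, non-Artinian Noetherian ring are cyclic (you invoke Stafford's generation theorems where the paper cites Theorem 1.8.18 of Bj\"{o}rk \cite{bjor79} --- the same underlying result, so this is not a gap). Your argument that $M \not\cong D$ because $D$ has Krull dimension one and hence infinite length as a module over itself is, if anything, slightly cleaner than the paper's appeal to $d(M)=1$ at that stage.
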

\begin{proof}
If $M$ is holonomic, then it has finite length by the comment above. If $M$
has finite length, then $M$ is cyclic by Theorem 1.8.18 in Bj\"{o}rk
\cite{bjor79}, since $D$ is a simple ring by Theorem \ref{t:diff-str}.
Moreover, $M$ is not isomorphic to $D$ since $d(M) = 1$ and $d(D) = 2$. In
fact, the Bernstein filtration is a good filtration of $D$, and we have
\begin{align*}
\dim_k B^n &= 1 + 2 + \dots + (n+1) - s = \frac{(n+1)(n+2)}{2} - s \\
&= \frac12 n^2 + \text{ terms of lower degree}
\end{align*}
for $n$ sufficiently large, where $\gr D = k[\Gamma'] \subseteq k[t,\xi]$ and
$s$ is the cardinality of $\n_0^2 \setminus \Gamma'$. Finally, we show that
if $M$ is cyclic and not isomorphic to $D$, then it is holonomic. In this
case, we may assume that $M = D/I$ with $I \neq 0$, and there is a non-zero
element $P \in I$ and a principal left ideal $J = D \cdot P \subseteq I$. The
short exact sequence
    \[ 0 \to D \xrightarrow{\cdot P} D \to D/J \to 0 \]
shows that $d(D/J) = 1$. In fact, if $d(D/J) = 2$, then $e(D) = e(D) +
e(D/J)$, and this is a contradiction since $e(D) = 1$ from the computation
above. The short exact sequence $0 \to I/J \to D/J \to D/I \to 0$ gives that
$d(M) = d(D/I) \le d(D/J) = 1$, and this implies that $M$ is holonomic. \qed
\end{proof}

\section{Graded holonomic D-modules}

Let $D$ be the algebra of differential operators on a monomial curve $A =
k[\Gamma]$, with the $\z$-grading $D = \osum_w \, D_w$ described in Section
\ref{s:diff-ring}, and consider the category $\grdmod$ of $\z$-graded left
$D$-modules. An object of $\grdmod$ is a left $D$-module $M$ with a grading
    \[ M = \osum_w \, M_w \]
such that $D_v \cdot M_w \subseteq M_{v+w}$, and a morphism $\phi: M \to N$
in $\grdmod$ is a $D$-module homomorphism which is homogeneous of degree
zero, such that $\phi(M_w) \subseteq N_w$. For any graded $D$-module $M$ in
$\grdmod$ and any integer $n$, we denote by $M[n]$ the $n$'th twisted
$D$-module of $M$, with grading given by $M[n]_i = M_{n+i}$.

We wish to study and classify the graded holonomic $D$-modules, up to graded
isomorphisms in $\grdmod$ and twists. The full subcategory $\grdhol \subseteq
\grdmod$ of graded holonomic $D$-modules consists of graded $D$-modules $M$
of finite length, with composition series
    \[ M = M_0 \supseteq M_1 \supseteq \dots \supseteq M_{n-1} \supseteq M_n
    = 0 \]
in $\grdmod$ of length $n \le e(M)$; see Proposition \ref{p:dmod-hol}. We are
interested in the simple objects in $\grdmod$, since these are the simple
factors $M_i/M_{i+1}$ in the composition series.

\begin{lemma}
A graded left $D$-module $M$ is a simple object in $\grdmod$ if and only if
it is simple considered as a left $D$-module.
\end{lemma}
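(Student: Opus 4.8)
The $(\Leftarrow)$ direction is immediate: a module that is simple as a left $D$-module has no nonzero proper submodules at all, hence in particular none that happen to be graded, so it is simple in $\grdmod$. All the work lies in the $(\Rightarrow)$ direction, and the plan is to show that a nonzero graded module $M$ which is simple in $\grdmod$ admits no nonzero proper ungraded submodule either. The whole argument rests on the identity $D_0 = k[E]$, which holds because $\Omega(0) = \emptyset$ forces $P_0 = 1$.

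First I would record that every nonzero homogeneous element generates $M$: if $0 \neq m \in M_w$, then $Dm$ is a nonzero graded submodule, so $Dm = M$ by simplicity in $\grdmod$. Extracting the degree-$w$ part of this identity gives $M_w = D_0\, m = k[E]\, m$ for \emph{every} nonzero $m \in M_w$. Thus each nonzero graded piece $M_w$ is a simple module over $k[E]$, and since $k = \cc$ is algebraically closed, $M_w$ is one-dimensional with $E$ acting as a scalar $\lambda_w$.

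The key step is to pin down the eigenvalues $\lambda_w$. For any homogeneous operator $Q \in D_w$ one has the commutation relation $[E,Q] = w\, Q$, since $E$ acts on $T_i$ as multiplication by $i$; consequently $Q$ carries an $E$-eigenvector of eigenvalue $\lambda$ to one of eigenvalue $\lambda + w$ whenever the image is nonzero. Applying this with a generator $m \in M_w$ and the operator $P_{w'-w}$, and using $M_{w'} = D_{w'-w}\, m = k\, P_{w'-w}\, m$, I would deduce that whenever $M_{w'} \neq 0$ we have $\lambda_{w'} = \lambda_w + (w'-w)$. Hence $\lambda_w - w$ is constant on the support of $M$, and in particular \emph{distinct} nonzero graded pieces carry \emph{distinct} $E$-eigenvalues. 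This separation of eigenvalues is the crux of the proof.

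Finally, given any nonzero ungraded submodule $N \subseteq M$, I would take $0 \neq x \in N$ and write $x = \sum_w x_w$ with $x_w \in M_w$, a finite sum. Since the nonzero summands lie in distinct $E$-eigenspaces, Lagrange interpolation produces a polynomial $f$ with $f(E)\, x = x_{w_0}$ for any chosen $w_0$ in the support; as $f(E) \in k[E] \subseteq D$, this shows $x_{w_0} \in N$. Thus $N$ contains a nonzero homogeneous element, and by the first step $N = M$, so $M$ is simple as a left $D$-module. The main obstacle is precisely the passage from homogeneous to arbitrary elements, and it is overcome by the eigenvalue separation established above rather than by any general graded-ring formalism, which would not suffice here.
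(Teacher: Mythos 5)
Your proof is correct, and it takes a genuinely different route from the paper's. The paper handles the nontrivial direction by citing Theorem II.7.5 of N\u{a}st\u{a}sescu--van Oystaeyen (a graded-simple module over a $\z$-graded ring is simple or $1$-critical) and then rules out the $1$-critical case by observing that $M \cong D(-w)/I$ with $I \neq 0$ is holonomic, hence of finite length and Artinian. You instead argue directly from the structure of $D$: the identity $D_0 = k[E]$ (valid since $\Omega(0) = \emptyset$) forces each nonzero graded piece $M_w$ to be a simple $k[E]$-module, hence a one-dimensional $E$-eigenspace; the relation $[E,Q] = wQ$ for $Q \in D_w$ together with $D_w = P_w \cdot k[E]$ shows the eigenvalues satisfy $\lambda_{w'} - w' = \lambda_w - w$ on the support, so distinct pieces carry distinct eigenvalues; and Lagrange interpolation in $f(E) \in D_0$ then extracts a homogeneous component from any nonzero element of an arbitrary submodule. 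All steps check out, including the subtle ones ($M_{w'} = D_{w'-w}\,m$ is an equality because each $D_v m$ lands in a single graded piece, and $D_{w'-w}\,m = kP_{w'-w}\,m$ because $k[E]m = km$). What the paper's argument buys is brevity given the cited machinery, plus the observation that graded-simple modules are automatically holonomic; what yours buys is self-containedness and concrete structural output --- each graded piece of a simple graded $D$-module is at most one-dimensional, with $E$ acting by a scalar determined up to a global shift --- which anticipates and essentially re-derives part of the later classification of the modules $M_{\alpha}$. Your closing remark is also fair: general graded-ring formalism alone cannot suffice, since for instance $k[t,t^{-1}]$ is graded-simple but not simple over itself, so some input specific to $D$ (holonomicity in the paper, eigenvalue separation in your version) is genuinely needed.
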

\begin{proof}
It is clear that if $M$ is simple as a left $D$-module, then it is simple
in $\grdmod$. To prove the converse, assume that $M$ is simple in $\grdmod$.
Then it follows from Theorem II.7.5 in N\u{a}st\u{a}sescu, van Oystaeyen
\cite{nast-oys79} that it is either simple or $1$-critical considered as a
left $D$-module. We claim that it cannot be $1$-critical. In fact, we may
choose a homogeneous element $m \neq 0$ of degree $w$ in $M$, and consider
the short exact sequence
    \[ 0 \to I \to D(-w) \xrightarrow{\cdot m} M \to 0 \]
where $D(-w)$ is a twist of $D$ and $I = \{ P \in D(-w): P \cdot m = 0 \}$
is the annihilator of $m$. Since $I \neq 0$, it follows from Proposition
\ref{p:dmod-hol} that $D/I$ is a holonomic $D$-module, and therefore of
finite length. This implies that $M$ is Artinian, of Krull dimension zero,
and it is therefore not $1$-critical. It follows that any simple object in
$\grdmod$ is simple considered as a left $D$-module. \qed
\end{proof}

The simple modules over the Weyl algebra $\weyl$ were classified in Block
\cite{bloc81}. We shall classify the simple objects in $\grdmod$ by adapting
Block's results to the graded situation. For any graded left $D$-module $M$,
we define
    \[ T_S(M) = \{ m \in M: sm = 0 \text{ for an element } s \in S \}
    \subseteq M \]
It follows from the fact that $S$ is an Ore set for $D$ that $T_S(M)$ is a
left $D$-module. We say that $M$ is an $S$-torsion module if $T_S(M) = M$,
and that it is an $S$-torsion free module if $T_S(M) = 0$. Notice that any
simple left $D$-module is either an $S$-torsion module or an $S$-torsion
free module, and that it is an $S$-torsion module if and only if $S^{-1}M =
0$.

There is a graded division algorithm in the ring $\diff(T)$ in the following
sense: For any homogeneous differential operators $P,Q \in \diff(T)$ with $Q
\neq 0$, we have that
    \[ P = L \cdot Q + R \]
for unique homogeneous differential operators $L,R \in \diff(T)$, where the
order $d(R) < d(Q)$. Therefore, any homogeneous left ideal $I \subseteq
\diff(T)$ is principal. In fact, if $P \neq 0$ is an element in $I$ with
minimal order, then $I = \diff(T) \cdot P$. This implies that a graded left
$\diff(T)$-module is simple considered as a $\diff(T)$-module if and only if
it is a simple object in the category of graded left $\diff(T)$-modules, and
we can talk about simple graded left $\diff(T)$-modules without ambiguity.

\begin{proposition} \label{p:grdmod-loc}
The assignment $M \mapsto S^{-1}M$ defines a bijective correspondence
    \[ S^{-1}: \grsimp_D[\text{S-torsion free}] \to \grsimp_{\diff(T)} \]
from the set of isomorphism classes of simple graded left $D$-modules that
are $S$-torsion free, and the set of isomorphism classes of simple graded
left $\diff(T)$-modules.
\end{proposition}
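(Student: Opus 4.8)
The plan is to establish the three properties that make up an Ore-localization correspondence: that $S^{-1}$ is well-defined on isomorphism classes, injective, and surjective. Throughout I would use that $S$ is a left Ore set of homogeneous elements, so that every homogeneous element of $S^{-1}M$ is a fraction $s^{-1}m$ with $s \in S$ and $m \in M$ homogeneous, that localization is exact and commutes with finite direct sums, that $S^{-1}N = N$ for any $\diff(T)$-module $N$, and that any $\diff(T)$-module is automatically $S$-torsion free since the elements of $S$ act invertibly. To see that $S^{-1}$ is well-defined, let $M$ be a simple graded $S$-torsion free left $D$-module. Since $M$ is $S$-torsion free and non-zero, the canonical map $M \to S^{-1}M$ is injective and $S^{-1}M \neq 0$. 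To prove $S^{-1}M$ simple, take a non-zero graded submodule $L \subseteq S^{-1}M$ and a homogeneous $0 \neq x \in L$, written $x = s^{-1}m$ with $s \in S$, $m \in M$ homogeneous; then $m = sx \in L \cap M$ and $m \neq 0$, so $L \cap M$ is a non-zero graded $D$-submodule of $M$, whence $L \cap M = M$ by simplicity, so $M \subseteq L$ and $L = S^{-1}M$. By the remark on the graded division algorithm in $\diff(T)$, a simple graded $\diff(T)$-module is simple, so $S^{-1}M \in \grsimp_{\diff(T)}$; functoriality of localization then shows $S^{-1}$ descends to isomorphism classes.

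Next I would prove injectivity. Suppose $\phi\colon S^{-1}M_1 \to S^{-1}M_2$ is an isomorphism of graded $\diff(T)$-modules, with $M_1, M_2$ simple graded $S$-torsion free $D$-modules, and set $N' = S^{-1}M_2$. Viewing $M_2$ and $M_2' = \phi(M_1)$ as graded $D$-submodules of $N'$, both are simple (as $\phi$ restricts to a $D$-isomorphism $M_1 \to M_2'$) and both localize onto $N'$. If $M_2 \cap M_2' = 0$, then $M_2 \oplus M_2' \subseteq N'$, and applying the exact functor $S^{-1}$ gives an embedding $N' \oplus N' \hookrightarrow S^{-1}N' = N'$, which is impossible since $N' \oplus N'$ has length two while the simple module $N'$ has length one. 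Hence $M_2 \cap M_2' \neq 0$, and since both are simple we conclude $M_2 = M_2' = \phi(M_1)$, so $\phi$ restricts to an isomorphism $M_1 \cong M_2$.

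Finally I would prove surjectivity, which I expect to be the main obstacle, since one must first produce a \emph{finitely generated} (in fact finite length) $D$-submodule before extracting a simple one — note that $N$ restricted to $D$ need not be finitely generated. Let $N$ be a simple graded $\diff(T)$-module and choose a homogeneous $0 \neq n_0 \in N$; put $M = D \cdot n_0 \subseteq N$. Then $M$ is a cyclic graded $D$-module, $S$-torsion free as a submodule of $N$, and $S^{-1}M = (S^{-1}D)\cdot n_0 = \diff(T)\cdot n_0 = N$ by simplicity of $N$. If $M$ were isomorphic to $D$ as a $D$-module, then $n_0$ would have zero annihilator in $D$, hence also in $\diff(T)$, giving $N \cong \diff(T)$ — impossible, since $\diff(T)$ is not simple (it has the proper non-zero graded left ideal $\diff(T)\partial$). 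Thus $M$ is cyclic and not isomorphic to $D$, so $M$ is holonomic by Proposition \ref{p:dmod-hol} and therefore of finite length. Choosing a minimal non-zero graded $D$-submodule $M' \subseteq M$ then yields a simple graded $S$-torsion free $D$-module with $S^{-1}M' \subseteq S^{-1}M = N$ non-zero, hence $S^{-1}M' = N$ by simplicity. This places $N$ in the image of $S^{-1}$ and completes the proof.
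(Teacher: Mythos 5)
Your proof is correct and follows essentially the same route as the paper: the surjectivity argument (pass to the cyclic $D$-submodule $D\cdot n_0 \cong D[-w]/\bigl(\ann_{\diff(T)}(n_0)\cap D\bigr)$, invoke Proposition~\ref{p:dmod-hol} to get finite length, and extract a minimal graded submodule) is the paper's argument almost verbatim, while for well-definedness and injectivity the paper simply cites Lemma~2.2.1 of Block and you supply the standard direct arguments (clearing denominators, and the $N'\oplus N'\hookrightarrow N'$ length contradiction) in their place. The only step you gloss over is that $M\cong D$ forces $\ann_D(n_0)=0$, which requires the standard fact that a surjective endomorphism of a Noetherian module is injective --- a harmless elision, consistent with the paper's own level of detail.
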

\begin{proof}
If $M$ is a simple graded left $D$-module that is $S$-torsion free, then
$S^{-1}M$ is a simple graded left $\diff(T)$-module, since $S$ consists of
homogeneous elements and localization is exact; see also Lemma 2.2.1 in
Block \cite{bloc81}. This defines the map $S^{-1}$, and that it is injective
follows from the proof of Lemma 2.2.1 in Block \cite{bloc81}. To show that
$S^{-1}$ is surjective, let us consider a graded simple left
$\diff(T)$-module $N$. We choose a non-zero homogeneous element $n \in N$
of degree $w$, which gives a short exact sequence
    \[ 0 \to I \to \diff(T)[-w] \xrightarrow{\cdot n} N \to 0 \]
where $I = \{ P \in \diff(T): P \cdot n = 0 \} = \ann(n)$. Let $J = I \cap
D[-w] \subseteq D[-w]$. This is a non-zero homogeneous ideal in $D[-w]$, and
$M = D[-w]/J \subseteq \diff(T)[-w]/I \cong N$ is a graded $D$-submodule.
Since $J \neq 0$, $M$ is a graded $D$-module of finite length by Proposition
\ref{p:dmod-hol}, hence it contains a graded simple left $D$-module $K$.
Then it follows from Lemma 2.2.1 in Block \cite{bloc81} and its proof that
$N \cong S^{-1}K$ since $K \subseteq M \subseteq N$. \qed
\end{proof}

Next, we classify the simple graded left $\diff(T)$-modules, up to graded
isomorphisms and twists. Any simple graded left $\diff(T)$-module $N$ must
be of the form $N \cong \diff(T)[-w]/I$, where $I = \diff(T) \cdot P$ is a
homogeneous principal left ideal that is maximal. Hence, we must have that
$P = t^w (E-\alpha)$ for $\alpha \in k$, and $t^w$ is a unit in $\diff(T)$.
We obtain the cases $\alpha = 0$, which gives the graded simple module
    \[ N_0 = \diff(T) / \diff(T) \cdot E = \diff(T) / \diff(T) \cdot \partial
    \cong T \]
and $\alpha \neq 0$, which gives the graded simple module
    \[ N_{\alpha} = \diff(T) / \diff(T) \cdot (E-\alpha) \]
It is not difficult to see that $N_{\alpha} \cong T[\alpha]$ if $\alpha \in
\z$, with isomorphism given by $1 \mapsto t^{\alpha}$. Similarly, we have
that $N_{\alpha} \cong N_{\beta}[\alpha-\beta]$ when $\alpha-\beta \in \z$,
with isomorphism given by $1 \mapsto t^{\alpha-\beta}$.

\begin{lemma}
Let $J = \{ \alpha \in k: 0 \le \re(\alpha) < 1 \}$. Then the set of simple
graded $\diff(T)$-modules, up to graded isomorphisms and twists, are given
by $\{ N_{\alpha}: \alpha \in J \}$.
\end{lemma}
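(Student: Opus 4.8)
The plan is to establish two complementary facts: that the family $\{N_\alpha : \alpha \in J\}$ is \emph{exhaustive}, i.e.\ every simple graded left $\diff(T)$-module is isomorphic, after a twist, to some $N_\alpha$ with $\alpha \in J$; and that it is \emph{irredundant}, i.e.\ distinct $\alpha, \beta \in J$ yield modules that are not isomorphic even after a twist. Exhaustiveness follows quickly from what precedes the lemma. Every simple graded left $\diff(T)$-module has the form $N_\alpha = \diff(T)/\diff(T)\cdot(E-\alpha)$ for some $\alpha \in k$, up to a twist. Given such an $\alpha$, I would set $\beta = \alpha - \lfloor \re(\alpha)\rfloor$, so that $\beta \in J$ and $\alpha - \beta \in \z$; the isomorphism $N_\alpha \cong N_\beta[\alpha-\beta]$ recorded above then shows that $N_\alpha$ agrees with $N_\beta$ up to a twist, with $\beta \in J$. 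Hence every simple graded module already appears in the list.

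The substance lies in irredundancy, and for this I would extract a twist-invariant that separates the cosets of $\z$ in $\cc$. The tool is the action of the degree-zero operator $E$ on the graded pieces of $N_\alpha$. Since $\Gamma = \z$ for $T$, one has $\diff(T)_w = t^w\, k[E]$, and because $E-\alpha$ is homogeneous of degree zero, the degree-$w$ part of the ideal $\diff(T)\cdot(E-\alpha)$ is $t^w(E-\alpha)k[E]$. Therefore $(N_\alpha)_w \cong k[E]/(E-\alpha)k[E]$ is one-dimensional, spanned by the image $\overline{t^w}$. Using the commutation relation $E\,t^w = t^w(E+w)$ together with $t^w(E-\alpha) \equiv 0$, a short computation gives $E\cdot\overline{t^w} = (\alpha+w)\,\overline{t^w}$; that is, $E$ acts on $(N_\alpha)_w$ by the scalar $\alpha + w$. (For $\alpha = 0$ this recovers $N_0 \cong T$, with $E$ acting by $w$ on $t^w$.) This computation is the main technical step, though it is elementary once the identity $f(E)\,t^w = t^w f(E+w)$ is in hand; the only bookkeeping to watch is the degree shift introduced by commuting $t^w$ past $E$.

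With the $E$-eigenvalues identified, irredundancy is immediate. The set of eigenvalues of $E$ on $\bigoplus_w (N_\alpha)_w$ is the coset $\alpha + \z \subseteq \cc$. A twist by $[n]$ merely reindexes $(N_\alpha)_w$ to $(N_\alpha)_{w+n}$ and so leaves this coset unchanged, and any graded isomorphism is $\diff(T)$-linear and hence commutes with $E$, so it too preserves the coset; thus $\alpha + \z \in \cc/\z$ is a complete invariant of $N_\alpha$ up to graded isomorphism and twist. Since $J = \{\alpha : 0 \le \re(\alpha) < 1\}$ meets each coset of $\z$ in $\cc$ in exactly one point—adding an integer shifts $\re$ by an integer, and $[0,1)$ contains a unique representative of each class—distinct $\alpha, \beta \in J$ satisfy $\alpha + \z \neq \beta + \z$ and so give non-isomorphic modules. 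Combining this with exhaustiveness shows that $\{N_\alpha : \alpha \in J\}$ is precisely the set of simple graded $\diff(T)$-modules up to graded isomorphism and twist.
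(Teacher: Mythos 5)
Your proof is correct, but it takes a genuinely different route from the paper. The paper proves irredundancy by transporting the question back to the Weyl algebra: it invokes Proposition \ref{p:grdmod-loc} to identify $N_{\alpha}$ with $S^{-1}M_{\alpha}$ for $M_{\alpha} = \weyl/\weyl(E-\alpha)$ (resp.\ $M_0 = \weyl/\weyl\partial$), and then cites Dixmier's classical results (Lemma 24 of \cite{dixm63} and Proposition 4.4 of \cite{dixm70}) for the fact that $M_{\alpha} \not\cong M_{\beta}$ when $\alpha - \beta \notin \z$; this yields non-isomorphism even as ungraded $\diff(T)$-modules, at the cost of relying on external machinery. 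You instead build the invariant by hand: since $\diff(T)_w = t^w k[E]$ and $E-\alpha$ is homogeneous of degree zero, each graded piece $(N_{\alpha})_w \cong k[E]/(E-\alpha)$ is a line on which $E$ acts by the scalar $\alpha + w$ (via $E t^w = t^w(E+w)$), so the spectrum of $E$ is the coset $\alpha + \z$, which is manifestly invariant under twists and under any $\diff(T)$-linear isomorphism. This is self-contained, elementary, and arguably more illuminating; and since $N_{\alpha}$ is the direct sum of its one-dimensional $E$-eigenspaces, your eigenvalue argument in fact also recovers the stronger ungraded non-isomorphism that the paper gets from Dixmier. Your exhaustiveness step (reducing $\alpha$ modulo $\z$ into $J$ via $N_{\alpha} \cong N_{\beta}[\alpha-\beta]$) matches the paper's remark that $J$ is a fundamental domain for $k/\z$. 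No gaps.
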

\begin{proof}
We claim that if $\alpha - \beta \not \in \z$, then $N_{\alpha} \not \cong
N_{\beta}$ as left $\diff(T)$-modules. In light of the comments above, this
is enough to prove the lemma, since $J$ is a fundamental domain for $k/\z$.
To prove the claim, we consider the Weyl algebra $\weyl$ as a special case of
$D = \diff(A)$ with $A = k[\Gamma]$ and $\Gamma
= \n_0$. By Proposition \ref{p:grdmod-loc}, the left module $M_{\alpha} =
\weyl/\weyl \cdot (E-\alpha)$ corresponds to $N_{\alpha} = S^{-1}M_{\alpha}$
under localization when $\alpha \not \in \z$, and $M_0 = \weyl / \weyl \cdot
\partial$ corresponds to $N_0$. By results of Dixmier on the Weyl algebra, we
have that $M_{\alpha} \not \cong M_{\beta}$ when $\alpha -\beta \neq \z$; see
Lemma 24 in Dixmier \cite{dixm63} and Proposition 4.4 in Dixmier
\cite{dixm70}, and this proves the claim. \qed
\end{proof}

Let $A = k[\Gamma]$ be a monomial curve, let $D$ be the ring of differential
operators on $A$, and let $\weyl$ be the ring of differential operators on $B
= k[t]$. Then $D$ is Morita equivalent with the Weyl algebra $\weyl$, and by
Section 3.14 of Smith, Stafford \cite{smit-staf88}, the equivalence $\mcat D
\to \mcat{\weyl}$ of module categories is given by $M \mapsto D(A,B)
\otimes_D M$, where $D(A,B)$ is the $\weyl$-$D$ bimodule
    \[ D(A,B) = \{ P \in \diff(T): P(A) \subseteq B \} \subseteq \diff(T) \]
Notice that $D(A,B)$ is a graded $\weyl$-$D$ bimodule, with $S^{-1}D(A,B) =
\diff(T)$. Hence, there is an induced equivalence $F: \grdmod \to \grmod{
\weyl}$ of categories of graded modules, and it commutes with localization:
    \[ \xymatrix{ & \grmod{\diff(T)} & \\ \grdmod \ar[ur]^{S^{-1}}
    \ar[rr]_F & & \grmod{\weyl} \ar[ul]_{S^{-1}} } \]
We know that $\grsimp_D[\text{S-torsion free}] \cong \grsimp_{\diff(T)}$, and
we write $M_{\alpha}$ for the unique simple graded left $D$-module $M$ such
that $S^{-1}M = N_{\alpha}$ for $\alpha \in J$. In fact, it follows from the
proof of Proposition \ref{p:grdmod-loc} that $M_{\alpha}$ is the unique
simple submodule of $D/D \cdot (E - \alpha)$ if $\alpha \neq 0$, and that
$M_0$ is the unique simple submodule of $D/D \cdot \partial = A$. We claim
that $M_{\alpha} = A$ if $\alpha = 0$, and that $M_{\alpha} = D/D \cdot (E -
\alpha)$ if $\alpha \neq 0$. If $\alpha = 0$, this is clear since $A$ is a
simple left $D$-module. If $\alpha \neq 0$, then the claim follows from the
following result, since simple modules are preserved by Morita equivalence:

\begin{lemma}
If $A = k[t]$, then $M_{\alpha} = D/D \cdot (E - \alpha)$ for all $\alpha \in
J$ with $\alpha \neq 0$.
\end{lemma}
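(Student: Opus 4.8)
The module $M_{\alpha}$ is, by the proof of Proposition \ref{p:grdmod-loc}, the unique simple graded submodule of $M := D/D\cdot(E-\alpha)$, so the assertion $M_{\alpha} = M$ is precisely the claim that $M$ itself is simple. By the lemma showing that a graded $D$-module is simple in $\grdmod$ if and only if it is simple as a $D$-module, it is enough to prove that $M$ is simple in $\grdmod$. Here $\Gamma = \n_0$, so $D = \weyl$ with $E = t\partial$, and since $E-\alpha$ is homogeneous of degree zero the ideal $D\cdot(E-\alpha)$ is graded and $M \in \grdmod$. The plan is to write down the graded structure of $M$ explicitly and reduce simplicity to the non-vanishing of finitely many scalars.

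First I would record the homogeneous components. As recalled in Section \ref{s:diff-ring}, for $\Gamma = \n_0$ one has $P_w = t^w$ for $w \ge 0$ and $P_{-n} = \partial^n$ for $n \ge 1$, with $D_w = P_w\cdot k[E]$; moreover $\partial^n f(E) = f(E+n)\partial^n$ shows $D_{-n} = \partial^n k[E]$ is free of rank one over $k[E]$. Since $(D(E-\alpha))_w = D_w\cdot(E-\alpha)$, this gives $M_w = D_w/D_w(E-\alpha) \cong k[E]/(E-\alpha)k[E] \cong k$ for every $w \in \z$. Thus every graded component $M_w$ is one-dimensional, spanned by the image $\overline{P_w}$ of $P_w$, and a graded submodule of $M$ is exactly the span of a subset of these lines.

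Next I would compute the transition maps from the relations $E t^w = t^w(E+w)$ and $E\partial^n = \partial^n(E-n)$, equivalently $\partial\, t^w = t^{w-1}(E+w)$ and $t\,\partial^n = \partial^{n-1}(E-n+1)$. Reducing modulo $E \equiv \alpha$ yields, on the basis vectors $\overline{P_w}$,
\[
\begin{gathered}
t\cdot\overline{t^{w}} = \overline{t^{w+1}}\ (w\ge 0), \qquad
\partial\cdot\overline{t^{w}} = (\alpha+w)\,\overline{t^{w-1}}\ (w\ge 1), \\
\partial\cdot\overline{\partial^{n}} = \overline{\partial^{n+1}}\ (n\ge 0), \qquad
t\cdot\overline{\partial^{n}} = (\alpha-n+1)\,\overline{\partial^{n-1}}\ (n\ge 1),
\end{gathered}
\]
where $\overline{\partial^{0}} = \overline{1}$. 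Every scalar occurring here is nonzero precisely when $\alpha \notin \z$; and since $\alpha \in J$ with $\alpha \neq 0$ forces $\alpha \notin \z$ (the only integer in $J$ is $0$), all transition maps are isomorphisms between consecutive one-dimensional pieces. Hence any nonzero graded submodule, containing some $M_{w_0}$, reaches every $M_{w}$ by repeatedly applying $t$ upward and $\partial$ downward, so the only graded submodules of $M$ are $0$ and $M$. Therefore $M$ is simple in $\grdmod$, hence simple as a $D$-module, and $M_{\alpha} = M = D/D\cdot(E-\alpha)$.

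I expect the delicate point to be the single scalar at the junction between non-negative and negative degrees, namely $t\cdot\overline{\partial} = \alpha\,\overline{1}$ (the case $n=1$ above): this is exactly where the hypothesis $\alpha \neq 0$ enters, and it is where the argument must fail for $\alpha = 0$, since $D/D\cdot E$ is then not simple but contains the proper submodule generated by $\overline{\partial}$, reflecting that $M_0 = A$ sits properly inside $D/D\cdot\partial$. All remaining non-vanishing conditions merely exclude the nonzero integers and are automatic for $\alpha \in J$, so no further case analysis is required.
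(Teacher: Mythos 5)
Your proof is correct, but it takes a genuinely different route from the paper. The paper's proof is a one-line citation: it reduces to showing that $D/D\cdot(E-\alpha)$ is a simple $\weyl$-module and quotes Lemma 24 of Dixmier for this. You instead give a self-contained argument: using $D_w = P_w\cdot k[E]$ with $P_w = t^w$ ($w\ge 0$) and $P_{-n}=\partial^n$, you identify each graded piece of $M = D/D(E-\alpha)$ as a line spanned by $\overline{P_w}$, compute the transition scalars from the commutation relations, observe they all lie in $\alpha+\z$ and hence are nonzero exactly when $\alpha\notin\z$ (automatic for $\alpha\in J$, $\alpha\neq 0$), conclude graded simplicity, and then invoke the paper's earlier lemma that simplicity in $\grdmod$ is equivalent to simplicity as a $D$-module. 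All the computations check out ($\partial t^w = t^{w-1}(E+w)$, $t\partial^n=\partial^{n-1}(E-n+1)$, and $M_w\cong k[E]/(E-\alpha)\cong k$ since $D_w$ is a free rank-one right $k[E]$-module). What your approach buys is transparency and independence from Dixmier's classification: the role of the hypothesis $\alpha\neq 0$ is made visible in the single scalar $t\cdot\overline{\partial}=\alpha\,\overline{1}$. What the paper's approach buys is brevity and the stronger ungraded statement directly, without routing through the graded-simple-equals-simple lemma. One small quibble with your closing remark: for $\alpha=0$ the failure is indeed that $D/DE$ is not simple (it has the proper submodule $D\partial/DE$, whose quotient is $D/D\partial\cong A$), but your phrase ``$M_0=A$ sits properly inside $D/D\cdot\partial$'' is off --- in the case $A=k[t]$ one has $M_0 = D/D\partial$ exactly, and it is $D/DE$, not $D/D\partial$, that properly contains a copy of a simple module. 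This does not affect the proof, which concerns only $\alpha\neq 0$.
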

\begin{proof}
It is sufficient, in light of the comments above, to show that $D/D \cdot (E -
\alpha)$ is a simple module over $D = \weyl$. This follows from Lemma 24 in
Dixmier \cite{dixm63}. \qed
\end{proof}

Finally, we classify the simple graded $D$-modules that are $S$-torsion
modules. We first consider the case $A = k[t]$, and the $S'$-torsion modules
over the Weyl algebra $D = \weyl$, where $S' = k[t]^*$. In this case, it
follows from Proposition 4.1 and Corollary 4.1 in Block \cite{bloc81} that
the simple $S'$-torsion $D$-modules are given by
    \[ V(\beta) = D \otimes_A A/(t-\beta) \cong D / D \cdot (t - \beta) \]
for $\beta \in k$. Moreover, $V(\beta)$ is a graded left $D$-module, or an
$S$-torsion module, if and only if $\beta = 0$. Hence, $V(0) = D/D \cdot t$ is
the unique simple graded left module over the Weyl algebra $D = \weyl$ that is
$S$-torsion.

Let $D$ be the ring of differential operators on $A$ when $A = k[\Gamma]$ is
any monomial curve. Since $D$ is Morita equivalent to the Weyl algebra and the
property of being $S$-torsion is preserved under Morita equivalence, there is
a unique simple graded left $D$-module $M_{\infty}$ that is $S$-torsion.
Moreover, we have that
    \[ M_{\infty} = D(B,A) \otimes_{\weyl} V(0) \]
where $D(B,A) = \{ P \in \diff(T): P(B) \subseteq A \} \subseteq \diff(T)$ by
Section 3.14 in Smith, Stafford \cite{smit-staf88}. We summarize these
results as follows:

\begin{theorem}
If $A = k[\Gamma]$ is a monomial curve, then the simple graded left modules
over the ring $D$ of differential operators on $A$, up to graded isomorphisms
and twists, are given by
    \[ \{ M_0 \} \cup \{ M_{\alpha}: \alpha \in J^* \} \cup \{ M_{\infty} \}
    \]
where $J^* = \{ \alpha \in k: 0 \le \re(\alpha) < 1, \; \alpha \neq 0 \}$.
Moreover, we have that $M_0 = A$ and that $M_{\alpha} = D/D \cdot (E -
\alpha)$ for all $\alpha \in J^*$.
\end{theorem}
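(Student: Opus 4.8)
The plan is to prove the theorem by assembling the results established earlier in this section, organizing the simple graded $D$-modules according to the dichotomy between $S$-torsion and $S$-torsion free modules. Recall that any simple left $D$-module is either $S$-torsion or $S$-torsion free, and that it is $S$-torsion if and only if $S^{-1}M = 0$. I would treat these two cases separately and then take the union of the two resulting families.

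First, for the $S$-torsion free simple graded modules, I would invoke Proposition \ref{p:grdmod-loc}, which gives a bijection $S^{-1}$ between isomorphism classes of $S$-torsion free simple graded left $D$-modules and isomorphism classes of simple graded left $\diff(T)$-modules. By the preceding lemma, the latter are, up to graded isomorphism and twist, exactly the modules $N_\alpha$ with $\alpha \in J = \{ \alpha \in k : 0 \le \re(\alpha) < 1 \}$, since $J$ is a fundamental domain for $k/\z$ and $N_\alpha \cong N_\beta[\alpha-\beta]$ whenever $\alpha - \beta \in \z$. Pulling these back along the bijection produces, for each $\alpha \in J$, a unique simple graded $D$-module $M_\alpha$ with $S^{-1}M_\alpha = N_\alpha$. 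I would note that distinct $\alpha, \beta \in J$ give non-isomorphic $M_\alpha, M_\beta$: two elements of $J$ differing by an integer must coincide, so the lemma's non-isomorphism $N_\alpha \not\cong N_\beta$ is carried back by the bijection of Proposition \ref{p:grdmod-loc} to $M_\alpha \not\cong M_\beta$.

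Next I would identify these modules explicitly. For $\alpha = 0$, since $A$ is simple as a left $D$-module by Theorem \ref{t:diff-str} and $S^{-1}A = T \cong N_0$, the correspondence forces $M_0 = A$. For $\alpha \in J^*$, the proof of Proposition \ref{p:grdmod-loc} already exhibits $M_\alpha$ as the unique simple submodule of $D/D\cdot(E-\alpha)$; to upgrade this to the equality $M_\alpha = D/D\cdot(E-\alpha)$ I would invoke the lemma above, which shows that $D/D\cdot(E-\alpha)$ is simple over the Weyl algebra, and then transport simplicity to arbitrary $D$ via the Morita equivalence of Theorem \ref{t:diff-str}. The only genuine subtlety here is the separation of the case $\alpha = 0$, where $M_0 = A = D/D\partial$ rather than $D/D\cdot E$.

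Finally, for the $S$-torsion simple graded modules I would reduce to the Weyl algebra via the Morita equivalence of Theorem \ref{t:diff-str}. Over $\weyl$ the unique simple graded $S$-torsion module is $V(0) = D/D\cdot t$, and since the property of being $S$-torsion is preserved under Morita equivalence, there is a unique simple graded $S$-torsion $D$-module $M_\infty = D(B,A) \otimes_{\weyl} V(0)$. Taking the union of the $S$-torsion free family $\{ M_0 \} \cup \{ M_\alpha : \alpha \in J^* \}$ with $\{ M_\infty \}$ then yields the stated classification. The main obstacle, such as it is, lies not in any single step but in carefully tracking the phrase ``up to graded isomorphisms and twists'' through the localization correspondence: one must verify that $J$ (respectively $J^*$) parametrizes pairwise non-isomorphic classes, and that the Morita functor $F$ transports the $S$-torsion classification from $\weyl$ to $D$ compatibly with twists, using the commuting square relating $F$ and $S^{-1}$.
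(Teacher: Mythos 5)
Your proposal is correct and follows essentially the same route as the paper: the theorem there is explicitly a summary of the preceding results, assembled exactly as you do via the $S$-torsion/$S$-torsion free dichotomy, the localization bijection of Proposition \ref{p:grdmod-loc} with the classification of the $N_\alpha$, and the Morita-equivalence argument producing $M_\infty$ from $V(0)$ over the Weyl algebra.
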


\begin{corollary}
The simple graded left modules over the first Weyl algebra $D = \weyl$, up to
graded isomorphisms and twists, are given by $M_0 = D/D\partial$, $M_{\infty}
= D/Dt$, and $M_{\alpha} = D/D(E-\alpha)$ for all $\alpha \in J^*$.
\end{corollary}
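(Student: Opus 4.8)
The plan is to obtain the Corollary as the instance of the preceding Theorem for the monomial curve $A = k[t]$, that is, for the numerical semigroup $\Gamma = \n_0$. For this semigroup we have the single generator $a_1 = 1$ and $H = \emptyset$, so that $A = k[\n_0] = k[t] = B$ and $D = \diff(A) = \diff(B) = \weyl$ is the first Weyl algebra. The Theorem above already asserts that the simple graded left $D$-modules, up to graded isomorphism and twist, are exactly $\{ M_0 \} \cup \{ M_\alpha : \alpha \in J^* \} \cup \{ M_\infty \}$, so the entire task reduces to rewriting each of the three types in the explicit form claimed for $\weyl$.

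The family $M_\alpha = D/D\,(E-\alpha)$ with $\alpha \in J^*$ is already in the desired form, so only $M_0$ and $M_\infty$ require an explicit description. For $M_0$, I would invoke the Theorem to get $M_0 = A$ and then identify $A = k[t]$ with $D/D\partial$: the module $A$ is cyclic, generated by $1 \in A_0$, and since $\partial \cdot 1 = 0$ the assignment $1 \mapsto 1$ gives a surjection $D/D\partial \twoheadrightarrow A$ of graded left $D$-modules. Writing an operator in normal form as $P = \sum_{i,j} c_{ij}\, t^i \partial^j$, one computes $P \cdot 1 = \sum_i c_{i0}\, t^i$, which vanishes precisely when every term of $P$ carries a positive power of $\partial$, i.e. when $P \in D\partial$; hence the annihilator of $1$ is exactly $D\partial$ and $M_0 = A \cong D/D\partial$. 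For $M_\infty$, I would use the computation carried out immediately before the Theorem in the case $A = k[t]$, where the simple $S$-torsion modules over $\weyl$ are the $V(\beta) = D/D\,(t-\beta)$, and exactly one of these, namely $V(0) = D/D\cdot t$, is graded. Since $M_\infty$ is by definition the unique simple graded $S$-torsion $D$-module, this forces $M_\infty = V(0) = D/Dt$.

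Because the Corollary is a direct specialization of the Theorem, no step presents a genuine obstacle and the argument is essentially bookkeeping layered on top of the already-established classification. The one point I would take care over is the identification $M_0 \cong D/D\partial$: one must verify that the surjection $D/D\partial \twoheadrightarrow A$ is an isomorphism, which is not automatic from the simplicity of $A$ alone but follows from the explicit normal-form computation showing $\operatorname{ann}_D(1) = D\partial$. With $M_0$, $M_\infty$, and the family $M_\alpha$ all rewritten in the stated forms, the Corollary follows.
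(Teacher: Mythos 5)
Your proposal is correct and matches the paper's (implicit) derivation: the corollary is simply the theorem specialized to $A = k[t]$, with the identifications $M_0 = A = k[t] \cong D/D\partial$ and $M_\infty = V(0) = D/Dt$ already established in the text preceding the theorem. Your extra verification that $\ann_D(1) = D\partial$ via the normal form is a harmless elaboration of a point the paper treats as immediate.
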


Let $M_{\alpha},M_{\beta}$ be simple graded $D$-modules in $\grdhol$, with
$\alpha, \beta \in J \cup \{ \infty \}$. Their extensions in $\grdhol$ are
given by $\ext^1_D(M_{\alpha},M_{\beta})_0$ and can be computed using free
resolutions. Moreover, extensions are preserved under Morita equivalence,
and it is therefore enough to consider the case $D = \weyl$.

\begin{proposition}
If $D$ is the ring of differential operators on a monomial curve, then the
extensions of $M_{\beta}$ by $M_{\alpha}$ in $\grdhol$ are given by
    \[ \ext^1_D(M_{\alpha},M_{\beta})_0 = \begin{cases} k \xi \cong k, &
    (\alpha,\beta) = (0,\infty), (\infty,0) \\ k \xi \cong k, & \alpha =
    \beta \in J^* \\ 0, & \text{ otherwise} \end{cases} \]
for all simple graded $D$-modules $M_{\alpha}, M_{\beta}$ with $\alpha,
\beta \in J^* \cup \{ 0, \infty \}$.
\end{proposition}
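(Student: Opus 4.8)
The plan is to reduce to the first Weyl algebra $D = \weyl$ and to compute each group by a graded free resolution of length one. Since extensions are preserved under the Morita equivalence, as noted just above the statement, it suffices to treat $D = \weyl$, where the simple graded modules are $M_0 = D/D\partial$, $M_{\infty} = D/Dt$ and $M_{\gamma} = D/D(E-\gamma)$ for $\gamma \in J^*$. Because $D$ is hereditary by Theorem \ref{t:diff-str}, each of these admits a graded free resolution
\[ 0 \to D[s_\alpha] \xrightarrow{\;\cdot P_\alpha\;} D \to M_\alpha \to 0, \]
where $P_0 = \partial$, $P_{\infty} = t$ and $P_{\gamma} = E-\gamma$, and the twist $s_\alpha = -\deg P_\alpha$ is chosen so that right multiplication by $P_\alpha$ is homogeneous of degree zero.

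Next I would apply the graded functor $\hmm_D(-,M_\beta)$ to this resolution. Since it has length one, $\ext^1_D(M_\alpha,M_\beta)$ is the cokernel of the induced map, and the identifications $\hmm_D(D,M_\beta) \cong M_\beta$ and $\hmm_D(D[s_\alpha],M_\beta) \cong M_\beta[-s_\alpha]$ show that this map is left multiplication by $P_\alpha$. Thus
\[ \ext^1_D(M_\alpha,M_\beta) \cong \coker\big(M_\beta \xrightarrow{\;P_\alpha\cdot\;} M_\beta[-s_\alpha]\big), \]
a graded module whose homogeneous components I would read off degree by degree; the relevant degree-zero part (after the appropriate twist) is the group in the statement, while $\hmm_D(M_\alpha,M_\beta)$ is the corresponding kernel.

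The computation then rests on the explicit graded structure of each $M_\beta$ and the $E$-action on its pieces. Here $M_0 = A = k[t]$ has $(M_0)_w = k\,t^w$ for $w \ge 0$ and zero otherwise, with $E$ acting on $(M_0)_w$ by the scalar $w$; the module $M_{\gamma}$ is $S$-torsion free and isomorphic to $\diff(T)/\diff(T)(E-\gamma)$, so every component $(M_{\gamma})_w$ is one-dimensional with $E$ acting as $\gamma + w$; and the $S$-torsion module $M_{\infty} = D/Dt$ has, using the relations $\overline{t^{\,i}\partial^{\,j}} = (-1)^i \tfrac{j!}{(j-i)!}\,\overline{\partial^{\,j-i}}$ for $i \le j$ and $\overline{t^{\,k}} = 0$ for $k \ge 1$, components $(M_{\infty})_w = k\,\overline{\partial^{-w}}$ for $w \le 0$ and zero for $w > 0$, with $E$ acting as $-1$ throughout. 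Establishing this last description, and the effect of $L_\partial$ and $L_t$ on it, is the main obstacle, since it is what governs the two non-split cases.

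With these structures in hand the case analysis is routine. When $P_\alpha = E-\gamma$ has degree zero, $L_{P_\alpha}$ preserves degree and acts on the one-dimensional piece $(M_\beta)_w$ by the $E$-eigenvalue minus $\gamma$; this scalar is invertible except precisely when $\beta = \gamma \in J^*$ and $w = 0$, which yields $\coker \cong k$ there and zero elsewhere, accounting for the diagonal case. When $P_\alpha = \partial$ or $P_\alpha = t$, the map $L_{P_\alpha}$ strictly shifts degree and is an isomorphism on every component where both source and target are non-zero, so a non-zero cokernel can survive only at a boundary degree where the target is non-zero but the source vanishes. The modules $k[t]$ and $M_{\infty}$ are each half-bounded, with a single edge at degree zero, and a surviving cokernel occurs exactly when $L_{P_\alpha}$ pushes away from that edge, leaving the edge piece uncovered: this happens for $(\alpha,\beta)=(0,\infty)$, where $L_\partial$ lowers degree and the top of $M_{\infty}$ is uncovered, and for $(\infty,0)$, where $L_t$ raises degree and the bottom of $k[t]$ is uncovered, giving $\coker \cong k$ in both. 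In the remaining cases $(0,0)$ and $(\infty,\infty)$ the edge piece lies in the kernel rather than the cokernel, and all mixed $\gamma$-cases are invertible, so the cokernel vanishes. Identifying the surviving classes with their twists recovers the extension $\xi$ exhibited in the introduction and completes the proof.
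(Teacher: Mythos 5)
Your proposal is correct and follows essentially the same route as the paper: reduce to $D = \weyl$ by Morita equivalence, resolve each simple module by $0 \to D[s_\alpha] \xrightarrow{\cdot P_\alpha} D \to M_\alpha \to 0$, and compute $\ext^1_D(M_\alpha,M_\beta)$ as the cokernel of $L_{P_\alpha}$ on $M_\beta$ component by component (the paper writes out only the case $(\alpha,\beta)=(\infty,0)$ and declares the rest similar, so your full case analysis is, if anything, more complete). One small slip: on $(M_\infty)_w = k\,\overline{\partial^{\,-w}}$ the Euler operator acts by the scalar $w-1$, since $E\,\overline{\partial^{\,m}} = \overline{t\partial^{\,m+1}} = -(m+1)\,\overline{\partial^{\,m}}$, not by $-1$ throughout; this does not affect any conclusion, because $w-1-\gamma \neq 0$ for all $w \le 0$ and $\gamma \in J^*$.
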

\begin{proof}
By the comments above, we may assume that $D = \weyl$ is the Weyl algebra.
We show the computation of the extensions in the case $\alpha = \infty$ and
$\beta = 0$; all other cases can be done in a similar way. The $D$-module
$M_{\infty} = D/Dt$ over the Weyl algebra $D = \weyl$ has the free resolution
    \[ 0 \gets M_{\infty} \gets D \xleftarrow{\cdot t} D \gets 0 \]
When we apply $\hmm_D(-,M_0)$ to this exact seqence, we obtain
    \[ \hmm_D(D,M_0) \cong M_0 \xrightarrow{t \cdot} \hmm_D(D,M_0) \cong M_0
    \to 0 \]
and $\ext^1_D(M_{\infty},M_0) \cong M_0/tM_0 \cong A/tA = k[t]/(t) \cong
k$, which is concentrated in degree zero. \qed
\end{proof}

\bibliographystyle{spmpsci}
\bibliography{eeriksen}

\begin{thebibliography}{10}
\providecommand{\url}[1]{{#1}}
\providecommand{\urlprefix}{URL }
\expandafter\ifx\csname urlstyle\endcsname\relax
  \providecommand{\doi}[1]{DOI~\discretionary{}{}{}#1}\else
  \providecommand{\doi}{DOI~\discretionary{}{}{}\begingroup
  \urlstyle{rm}\Url}\fi

\bibitem{bjor79}
Bj\"ork, J.E.: Rings of differential operators, \emph{North-Holland
  Mathematical Library}, vol.~21.
\newblock North-Holland Publishing Co., Amsterdam-New York (1979)

\bibitem{bloc81}
Block, R.E.: The irreducible representations of the {L}ie algebra {${\mathfrak
  s}{\mathfrak l}(2)$}\ and of the {W}eyl algebra.
\newblock Adv. in Math. \textbf{39}(1), 69--110 (1981).
\newblock \urlprefix\url{https://doi.org/10.1016/0001-8708(81)90058-X}

\bibitem{brun-herz93}
Bruns, W., Herzog, J.: Cohen-{M}acaulay rings, \emph{Cambridge Studies in
  Advanced Mathematics}, vol.~39.
\newblock Cambridge University Press, Cambridge (1993)

\bibitem{cghsw00}
Campbell, H.E.A., Geramita, A.V., Hughes, I.P., Smith, G.G., Wehlau, D.L.: Some
  remarks on {H}ilbert functions of {V}eronese algebras.
\newblock Comm. Algebra \textbf{28}(3), 1487--1496 (2000).
\newblock \urlprefix\url{https://doi.org/10.1080/00927870008826908}

\bibitem{cout95}
Coutinho, S.C.: A primer of algebraic {$D$}-modules, \emph{London Mathematical
  Society Student Texts}, vol.~33.
\newblock Cambridge University Press, Cambridge (1995).
\newblock \urlprefix\url{https://doi.org/10.1017/CBO9780511623653}

\bibitem{dixm63}
Dixmier, J.: Repr\'esentations irr\'eductibles des alg\`ebres de {L}ie
  nilpotentes.
\newblock An. Acad. Brasil. Ci. \textbf{35}, 491--519 (1963)

\bibitem{dixm70}
Dixmier, J.: Sur les alg\`ebres de {W}eyl. {II}.
\newblock Bull. Sci. Math. (2) \textbf{94}, 289--301 (1970)

\bibitem{erik03}
Eriksen, E.: Differential operators on monomial curves.
\newblock J. Algebra \textbf{264}(1), 186--198 (2003).
\newblock \urlprefix\url{https://doi.org/10.1016/S0021-8693(03)00144-3}

\bibitem{erik18-itext}
{Eriksen}, E.: {Iterated Extensions and Uniserial Length Categories}.
\newblock ArXiv e-prints  (2018)

\bibitem{nast-oys79}
N\u{a}st\u{a}sescu, C., Van~Oystaeyen, F.: Graded and filtered rings and
  modules, \emph{Lecture Notes in Mathematics}, vol. 758.
\newblock Springer, Berlin (1979)

\bibitem{shuk80}
Shukla, P.K.: On {H}ilbert functions of graded modules.
\newblock Math. Nachr. \textbf{96}, 301--309 (1980).
\newblock \urlprefix\url{https://doi.org/10.1002/mana.19800960123}

\bibitem{smit-staf88}
Smith, S.P., Stafford, J.T.: Differential operators on an affine curve.
\newblock Proc. London Math. Soc. (3) \textbf{56}(2), 229--259 (1988).
\newblock \urlprefix\url{https://doi.org/10.1112/plms/s3-56.2.229}

\end{thebibliography}

\end{document}